\documentclass[11pt,a4paper]{article}

\usepackage{lmodern}
\usepackage{amsmath,amssymb,amsthm,mathtools}
\usepackage{geometry}
\usepackage{xcolor}            
\usepackage[normalem]{ulem}    
\usepackage{hyperref}
\hypersetup{colorlinks=true, linkcolor=blue, citecolor=blue}
\usepackage{enumitem}
\usepackage{booktabs}
\usepackage{authblk}

\mathtoolsset{showonlyrefs}

\newtheorem{theorem}{Theorem}[section]

\newtheorem{lemma}[theorem]{Lemma}
\newtheorem{corollary}[theorem]{Corollary}
\theoremstyle{definition}
\newtheorem{definition}[theorem]{Definition}
\newtheorem{remark}[theorem]{Remark}

\begin{document}

\title{Asymptotic behavior of Eckhoff's method for convergence acceleration
 of Dirac eigenfunction expansions}

\author[1,2]{R.~H.~Barkhudaryan}
\author[1]{G.~G.~Gevorkyan}
\author[1,2,3]{L.~D.~Poghosyan}

\affil[1]{Yerevan State University, Yerevan, Armenia}
\affil[2]{Institute of Mathematics of NAS RA, Yerevan, Armenia}
\affil[3]{DiGREA lab, Center for Scientific Innovation and Education, Yerevan, Armenia}
 % Update placeholder here

\maketitle

\begin{abstract}
    The current paper considers the problem of recovering a vector-function on $[-1,1]$ from a limited number of coefficients of its expansion into a series of eigenfunctions of a one-dimensional Dirac system. The Krylov--Lanczos--Eckhoff--Gottlieb acceleration method is examined in the situation when the boundary values it requires have to be computed from the generalized Fourier coefficients themselves. This leads to a $2q\times 2q$ linear system whose matrix is a block Vandermonde matrix; its determinant and inverse are computed explicitly, and the asymptotic $L_2$-error constant of the method is found, paralleling the classical trigonometric case.
\end{abstract}

\section*{Introduction}

Let $\{\lambda_n\}_{n\in\mathbb{Z}}$, $\{v_n\}_{n\in\mathbb{Z}}$ be the
eigenvalues and $L_2$-orthonormal eigenfunctions of a self-adjoint
boundary value problem on a finite interval, and let $f$ be a
square-integrable function on that interval. It is well known that the
expansion
\[
f(x)=\sum_{n=-\infty}^{\infty}c_n v_n(x),\qquad
c_n=\langle f,v_n\rangle,
\]
converges to $f$ in $L_2$. For practical purposes one approximates
$f$ by the truncated series $S_N(f)=\sum_{|n|\le N}c_n v_n$, and the
involved error is strongly dependent on the smoothness of $f$. If $f$
belongs to the domain of the underlying differential operator, this
approximation converges uniformly. If, however, $f$ or its derivatives
do not satisfy the boundary conditions, the convergence becomes slow,
its rate reflects only the smoothness of $f$, and the Gibbs phenomenon
appears at the endpoints.

For the classical Fourier series ($v_n=e^{i\pi n x}/\sqrt{2}$ on
$[-1,1]$, the eigenfunctions of $i\,d/dx$ with periodic boundary
conditions), increasing the convergence rate by subtracting a
polynomial that carries the discontinuities of the function and of its
first few derivatives was suggested by A.~Krylov as early as 1906
\cite{KR} and later, in 1964, by C.~Lanczos \cite{Lc2,Lc}. More
detailed investigation of this approach, together with practical
algorithms for computing the ``jumps'' of $f$ and its derivatives from
the Fourier coefficients alone, was performed by K.~S.~Eckhoff and
D.~Gottlieb \cite{E1,E2,E3,EW,GG,GSSV}. The method is now usually
called the Krylov--Lanczos--Eckhoff--Gottlieb, or simply Eckhoff,
method. Its asymptotic $L_2$ error constants were computed by A.~Barkhudaryan, R.~Barkhudaryan and A.~Poghosyan in \cite{BBP}. An alternative acceleration technique for nonperiodic
functions, based on quasi-periodic approximations and interpolations,
was developed in \cite{Poghosyan20201,Poghosyan20211,Poghosyan2024}.

For more general self-adjoint boundary value problems, in particular for
Sturm--Liouville and Dirac systems, the analogue of Krylov's correction
polynomial is no longer a Bernoulli polynomial but a combination of
derivatives of the Green's function of the underlying operator. The
basic algebraic identity behind the method, an integration-by-parts
decomposition of $c_n$ into a boundary part $P_n$ and a smoother
interior part $F_n$, was extended by J.~Shaw, L.~Johnson and R.~Riess
\cite{SJR} and, in the form most relevant here, by A.~Nersessian and
A.~Poghosyan \cite{NP1,NP5,NP6}. The Gibbs phenomenon in eigenfunction
expansions of the one-dimensional Dirac system was studied in
\cite{Bgibbs}. For the same system, by R.~Barkhudaryan obtained in
\cite{Bdirac,Baip} a generalization of the Eckhoff method under
the assumption that the boundary values of $f$ and of its successive
Dirac iterates are known \emph{exactly}, and computed the asymptotic
$L_2$ constant of the resulting approximation.

In the current paper we consider the case when these boundary values
are not known in advance and have to be \emph{computed} from the
Fourier coefficients $\{c_n\}_{|n|\le N}$ themselves, exactly as
Eckhoff proposed in the trigonometric case. The boundary conditions on
the eigenfunctions reduce the $4q$ scalar boundary values of $f$ to
$2q$ \emph{reduced boundary values} $g_k^\pm$, and Eckhoff's recipe,
applied at $2q$ selected indices $n_s$, leads to a linear system for
their approximations $\widetilde g_k^\pm$ --- the KEG-D system ---
whose coefficient matrix is a $2q\times 2q$ \emph{block Vandermonde
matrix}, the natural generalization of the $q\times q$ Vandermonde
matrix of the original Eckhoff method. Accordingly, in Section~\ref{sec:dirac} we recall the
Dirac boundary value problem and the exact-jumps form of the
convergence acceleration of \cite{Bdirac}; in Section~\ref{sec:vand}
we prove a closed-form expression for the determinant of the block
Vandermonde matrix and describe its inverse; in Section~\ref{sec:keg-d}
we set up the KEG-D system and estimate the errors
$\widetilde g_k^\pm-g_k^\pm$, showing in particular that strict parity
balance is necessary and sufficient for the leading-order coefficient
matrix to be nonsingular; and in Section~\ref{sec:l2} we prove the
main result of the paper (Theorem~\ref{thm:L2}), an explicit formula
for the limit of
$N^{2q+1}\|\widetilde R_{q,N}(f)\|^2$ in terms of the Dirac jump
constants.

\section{The Dirac system and exact-jumps method}\label{sec:dirac}

We work with the canonical Dirac system on $[-1,1]$
\begin{equation}\label{eq:dirac}
\begin{pmatrix}0 & 1\\-1 & 0\end{pmatrix}\frac{dy}{dx}
-
\begin{pmatrix}p(x) & 0\\ 0 & r(x)\end{pmatrix}y
=
\lambda y,
\end{equation}
with separated self-adjoint boundary conditions
\begin{equation}\label{eq:bc}
y_2(-1)\cos\alpha+y_1(-1)\sin\alpha=0,
\qquad
y_2(1)\cos\beta+y_1(1)\sin\beta=0,
\end{equation}
where $p,r\in C^1[-1,1]$ are real-valued and
$\alpha,\beta\in[0,\pi]$. We assume that $\lambda=0$ is not an
eigenvalue. This entails no loss of generality, since the spectrum is
discrete and a constant spectral shift may always be made so that zero
does not belong to the spectrum.

Let $\{\lambda_n\}_{n\in\mathbb Z}$ be the eigenvalues, indexed in
increasing order, with the origin of the indexing chosen consistently
with the asymptotic formula below, so that
$\lambda_n\to\pm\infty$ as $n\to\pm\infty$. Let $\{v_n\}$ be the
corresponding real orthonormal eigenfunctions in
$L_2([-1,1];\mathbb R^2)$. The asymptotic results of
Levitan--Sargsyan \cite{LS} give
\begin{equation}\label{eq:v-asymp}
\begin{aligned}
v_{n,1}(x) &= K_0\cos(\xi_n(x)-\alpha) + O(1/n),\\
v_{n,2}(x) &= K_0\sin(\xi_n(x)-\alpha) + O(1/n),
\end{aligned}
\end{equation}
as $n\to\pm\infty$, uniformly for $x\in[-1,1]$, where
\begin{equation}\label{eq:xi-def}
\xi_n(x)
:=
\lambda_n(x+1)
+
\frac{1}{2}\int_{-1}^{x}(p(\tau)+r(\tau))\,d\tau.
\end{equation}
Choosing the sign of each eigenfunction so that the leading coefficient
is positive, the $L_2$-normalization determines
\[
K_0=\frac{1}{\sqrt{2}},
\]
since \eqref{eq:v-asymp} yields
\[
\|v_n\|_{L_2}^2
=
\int_{-1}^{1}
\bigl(v_{n,1}^2(x)+v_{n,2}^2(x)\bigr)\,dx
=
2K_0^2+O\!\left(\frac{1}{n}\right)
=
1.
\]

Moreover, the eigenvalues satisfy \cite{LS}
\begin{equation}\label{eq:lam-asymp}
\lambda_n
=
\frac{\pi n}{2}
-
\frac{\widetilde\theta}{2}
+
O\!\left(\frac{1}{n}\right),
\qquad
n\to\pm\infty,
\end{equation}
where
\[
\widetilde\theta
:=
\beta-\alpha
+
\frac{1}{2}\int_{-1}^{1}(p(\tau)+r(\tau))\,d\tau.
\]
%%%%%%%%%%%%%%%%%%%%%%%%%%%%%%%%%%%%%%%%%%%%%%%%%%%%%%%%%%%%%%%%%%%%%%%%%%%%%%%
\subsection{Reduced boundary values}\label{ssec:reduced}

The eigenfunctions satisfy the boundary conditions \eqref{eq:bc}
exactly. Hence, their endpoint values belong to the corresponding
one-dimensional boundary subspaces:
\begin{equation}\label{eq:rank1}
v_n(1)
=
\kappa_n^{(+)}
\binom{\cos\beta}{-\sin\beta},
\qquad
v_n(-1)
=
\kappa_n^{(-)}
\binom{\cos\alpha}{-\sin\alpha},
\end{equation}
for some real scalars $\kappa_n^{(\pm)}$.

We set
\[
\mathcal Af:=Jf'-Qf,
\qquad
J=
\begin{pmatrix}
0&1\\
-1&0
\end{pmatrix},
\qquad
Q(x)=\operatorname{diag}(p(x),r(x)),
\]
and define
\begin{equation}\label{eq:B-fk-def}
\mathcal Bf:=-Jf=\binom{-f_2}{f_1},
\qquad
f_k:=\mathcal B\mathcal A^k f=-J\mathcal A^k f,
\qquad
k\geq0.
\end{equation}
Thus,
\begin{equation}\label{eq:fk-components}
\mathcal A^k f
=
\binom{(\mathcal A^k f)_1}{(\mathcal A^k f)_2}, \quad f_k
=
\binom{-(\mathcal A^k f)_2}{(\mathcal A^k f)_1}.
\end{equation}

For the remainder of this subsection, let $q\geq1$ and assume in
addition that
\[
f\in C^q([-1,1];\mathbb R^2),
\qquad
p,r\in C^q[-1,1],
\]
so that $\mathcal A^k f$ is continuous for $k=0,\dots,q$. For
$k=0,\dots,q$, we define the \emph{reduced boundary values}
\begin{equation}\label{eq:gk}
g_k^\pm
:=
f_{k,1}(\pm1)\cos\gamma_\pm
-
f_{k,2}(\pm1)\sin\gamma_\pm,
\qquad
\gamma_+=\beta,
\qquad
\gamma_-=\alpha.
\end{equation}
Equivalently,
\begin{equation}\label{eq:gk-components}
g_k^\pm
=
-(\mathcal A^k f)_2(\pm1)\cos\gamma_\pm
-
(\mathcal A^k f)_1(\pm1)\sin\gamma_\pm.
\end{equation}

By \eqref{eq:rank1}, the boundary terms involving the eigenfunctions
reduce to
\[
\begin{aligned}
v_n^T(\pm1)f_k(\pm1)
&=
\kappa_n^{(\pm)}
\begin{pmatrix}
\cos\gamma_\pm & -\sin\gamma_\pm
\end{pmatrix}
\binom{f_{k,1}(\pm1)}{f_{k,2}(\pm1)}
\\
&=
\kappa_n^{(\pm)}g_k^\pm.
\end{aligned}
\]

It remains to determine the asymptotic behavior of
$\kappa_n^{(\pm)}$. By \eqref{eq:xi-def} and \eqref{eq:lam-asymp},
\[
\xi_n(1)-\alpha
=
n\pi-\beta
+
O\!\left(\frac{1}{n}\right),
\qquad
\xi_n(-1)-\alpha
=
-\alpha.
\]
Substituting these relations into \eqref{eq:v-asymp} gives
\[
v_n(1)
=
K_0(-1)^n
\binom{\cos\beta}{-\sin\beta}
+
O\!\left(\frac{1}{n}\right),
\]
and
\[
v_n(-1)
=
K_0
\binom{\cos\alpha}{-\sin\alpha}
+
O\!\left(\frac{1}{n}\right).
\]
Since the boundary-direction vectors in \eqref{eq:rank1} have unit
norm, taking the scalar product with the corresponding direction in
the above asymptotic formulas and using \eqref{eq:rank1} yields
\begin{equation}\label{eq:kappapm-asymp}
\kappa_n^{(+)}
=
K_0(-1)^n
+
O\!\left(\frac{1}{n}\right),
\qquad
\kappa_n^{(-)}
=
K_0
+
O\!\left(\frac{1}{n}\right).
\end{equation}

%%%%%%%%%%%%%%%%%%%%%%%%%%%%%%%%%%%%%%%%%%%%%%%%%%%%%%%%%%%%%%%%%%%%%%%%%%%
\subsection{Approximation with exact boundary corrections}
\label{ssec:exact-boundary-corrections}

For $q\geq1$, let
\[
f\in C^q([-1,1];\mathbb R^2),
\qquad
p,r\in C^q[-1,1].
\]
Define the generalized Fourier coefficients of $f$ with respect to the
orthonormal Dirac eigenfunctions $\{v_n\}_{n\in\mathbb Z}$ by
\[
c_n
:=
\langle f,v_n\rangle_{L_2}
=
\int_{-1}^{1}v_n^T(x)f(x)\,dx.
\]

We first derive a one-step integration-by-parts identity. Since
$\mathcal Av_n=\lambda_n v_n$, for any sufficiently regular
vector-valued function $h$,
\[
\lambda_n\int_{-1}^{1}v_n^T h\,dx
=
\int_{-1}^{1}(\mathcal Av_n)^T h\,dx.
\]
Using \(\mathcal Av_n=Jv_n'-Qv_n\), \(J^T=-J\), \(Q^T=Q\),
we obtain
\[
\begin{aligned}
\int_{-1}^{1}(\mathcal Av_n)^T h\,dx
&=
\int_{-1}^{1}
\left(-v_n'^T Jh-v_n^TQh\right)\,dx
\\
&=
-\bigl[v_n^T Jh\bigr]_{-1}^{1}
+
\int_{-1}^{1}v_n^T(Jh'-Qh)\,dx
\\
&=
v_n^T(-1)Jh(-1)
-
v_n^T(1)Jh(1)
+
\int_{-1}^{1}v_n^T\mathcal Ah\,dx.
\end{aligned}
\]
Therefore,
\begin{equation}\label{eq:ibp-one-step}
\int_{-1}^{1}v_n^T h\,dx
=
\lambda_n^{-1}
\left[
v_n^T(-1)Jh(-1)
-
v_n^T(1)Jh(1)
\right]
+
\lambda_n^{-1}
\int_{-1}^{1}v_n^T\mathcal Ah\,dx.
\end{equation}
We now apply \eqref{eq:ibp-one-step} with $h=\mathcal A^k f$.
Using \eqref{eq:B-fk-def}, \eqref{eq:rank1} and the definition \eqref{eq:gk},
\[
\begin{aligned}
v_n^T(-1)J\mathcal A^k f(-1)
&-
v_n^T(1)J\mathcal A^k f(1)
\\
&
=
-v_n^T(-1)f_k(-1)
+
v_n^T(1)f_k(1)
\\
&
=
-\kappa_n^{(-)}g_k^-
+
\kappa_n^{(+)}g_k^+.
\end{aligned}
\]
Define
\[
x_n:=\frac{1}{\lambda_n}.
\]
Then
\begin{equation}\label{eq:ibp-recursion}
\int_{-1}^{1}v_n^T\mathcal A^k f\,dx
=
x_n\bigl(\kappa_n^{(+)}g_k^+-\kappa_n^{(-)}g_k^-\bigr)
+
x_n\int_{-1}^{1}v_n^T\mathcal A^{k+1}f\,dx.
\end{equation}

Starting with $k=0$ and iterating
\eqref{eq:ibp-recursion} $q$ times yields
\begin{equation}\label{eq:cn-decomp}
c_n=P_n+F_n,
\end{equation}
where
\begin{equation}\label{eq:PnFn}
P_n
=
x_n\sum_{k=0}^{q-1}
x_n^k\bigl(\kappa_n^{(+)}g_k^+-\kappa_n^{(-)}g_k^-\bigr),
\qquad
F_n
=
\lambda_n^{-q}
\int_{-1}^{1}
v_n^T(x)\mathcal A^q f(x)\,dx.
\end{equation}

If, in addition, $f^{(q)}$ is absolutely continuous, one
more integration by parts, combined with the asymptotics
\eqref{eq:v-asymp} and the Riemann--Lebesgue theorem (cf.\
\cite{Bdirac}), gives the sharper expansion
\begin{equation}\label{eq:Fn-asymp}
F_n=x_n^{q+1}\bigl[\kappa_n^{(+)}g_q^+-\kappa_n^{(-)}g_q^-\bigr]+o(|n|^{-q-1}),
\qquad |n|\to\infty.
\end{equation}

Following \cite{Bdirac} we define the exact-jumps approximation
\begin{equation}\label{eq:Sqn-exact}
S_{q,N}(f) := P(x) + \sum_{|n|\le N} F_n v_n(x),
\end{equation}
\begin{equation*}
P(x):=\sum_{k=0}^{q-1}\bigl[G_k(x,1,0)f_k(1)-G_k(x,-1,0)f_k(-1)\bigr],
\end{equation*}
where
\[
G_k(x,\xi,0)
=
\frac{1}{k!}\partial_\lambda^k G(x,\xi,\lambda)
\big|_{\lambda=0},
\]
with $G$ denoting the Green's matrix of
\eqref{eq:dirac}--\eqref{eq:bc}. By the spectral representation of the Green's matrix and its
derivatives, the generalized Fourier coefficient of $P$ corresponding
to $v_n$ is precisely $P_n$, that is,
\[
\int_{-1}^{1}v_n^T(x)P(x)\,dx=P_n.
\]
Substituting \eqref{eq:kappapm-asymp} into \eqref{eq:Fn-asymp} and using
the reduced boundary values $g_q^\pm$ defined in \eqref{eq:gk},
\begin{equation}\label{eq:Fn-AB}
F_n=K_0\,x_n^{q+1}\bigl[(-1)^n A+B\bigr]+o(|n|^{-q-1}),
\end{equation}
with the \emph{Dirac jump constants}
\begin{equation}\label{eq:AB}
\begin{aligned}
A&:=g_q^+ = f_{q,1}(1)\cos\beta - f_{q,2}(1)\sin\beta,\\
B&:=-g_q^- = f_{q,2}(-1)\sin\alpha - f_{q,1}(-1)\cos\alpha.
\end{aligned}
\end{equation}
Hence, by \eqref{eq:cn-decomp}, the generalized Fourier coefficients of
$f-P$ are precisely $F_n$. Consequently,
\[
R_{q,N}(f)
=
f-S_{q,N}(f)
=
\sum_{|n|>N}F_n v_n.
\]
By Parseval and \eqref{eq:Fn-AB},
\begin{equation}\label{eq:exact-L2}
\lim_{N\to\infty}N^{2q+1}\|R_{q,N}(f)\|^2
=\frac{2^{2q+2}}{\pi^{2q+2}(2q+1)}(A^2+B^2),
\end{equation}
where the constant is obtained by a Riemann-sum argument applied to
$$\sum_{|n|>N}K_0^2 x_n^{2q+2}[(-1)^nA+B]^2,$$ split by parity (see
the proof of Theorem~\ref{thm:L2}), using $K_0^2=1/2$. The formula
\eqref{eq:exact-L2} replaces Theorem~2 of \cite{Bdirac}: the constant
$2^{2q+3}/\pi^{2q+3}$ stated there must be replaced by
$2^{2q+2}/\pi^{2q+2}$, in accordance with $K_0^2=1/2$ rather than
$1/\pi$, and the boundary directions entering the constant $A$ there
involve an extraneous angle $2\theta-\beta$; the corrected
constants are \eqref{eq:AB}.

The approximation \eqref{eq:Sqn-exact} has a serious drawback: it assumes the boundary values $f_k(\pm1)$ to be known in advance, while in many applications they are not. In the next two sections we develop the tools needed to compute them from the generalized Fourier coefficients themselves.

%%%%%%%%%%%%%%%%%%%%%%%%%%%%%%%%%%%%%%%%%%%%%%%%%%%%%%%%%%%%%%%%%%%%%%%%%%%%%
\section{Block Vandermonde matrices}\label{sec:vand}

In this section, we study the algebraic structure of the coefficient
matrix arising in the linear system used to approximate the reduced
boundary values.

\begin{definition}
Let $q\in\mathbb N$, and let
$\alpha_1,\dots,\alpha_{2q}$, $\beta_1,\dots,\beta_{2q}$, and
$x_1,\dots,x_{2q}$ be complex numbers. We define the
\emph{block Vandermonde matrix of order $2q$} to be the following
$2q\times 2q$ matrix:
\begin{equation}\label{eq:bv}
\Lambda
=
\begin{pmatrix}
\alpha_1 & \alpha_1x_1 & \cdots & \alpha_1x_1^{q-1}
&
\beta_1 & \beta_1x_1 & \cdots & \beta_1x_1^{q-1}
\\
\alpha_2 & \alpha_2x_2 & \cdots & \alpha_2x_2^{q-1}
&
\beta_2 & \beta_2x_2 & \cdots & \beta_2x_2^{q-1}
\\
\vdots & \vdots & \ddots & \vdots
&
\vdots & \vdots & \ddots & \vdots
\\
\alpha_{2q} & \alpha_{2q}x_{2q} & \cdots
& \alpha_{2q}x_{2q}^{q-1}
&
\beta_{2q} & \beta_{2q}x_{2q} & \cdots
& \beta_{2q}x_{2q}^{q-1}
\end{pmatrix}.
\end{equation}
\end{definition}

Let
\[
V_q
=
\bigl(x_i^{j-1}\bigr)_
{1\le i\le 2q,\;1\le j\le q}
\]
denote the rectangular $2q\times q$ Vandermonde matrix, and let
\[
D_\alpha
=
\operatorname{diag}(\alpha_1,\dots,\alpha_{2q}),
\qquad
D_\beta
=
\operatorname{diag}(\beta_1,\dots,\beta_{2q}).
\]
Then
\begin{equation}\label{eq:bv-factor}
\Lambda
=
\bigl(D_\alpha V_q\mid D_\beta V_q\bigr)
=
BC,
\qquad
B
=
\bigl(D_\alpha\mid D_\beta\bigr),
\qquad
C
=
\begin{pmatrix}
V_q & 0\\
0 & V_q
\end{pmatrix}.
\end{equation}
Here,
\[
B\in\mathbb C^{2q\times 4q},
\qquad
C\in\mathbb C^{4q\times 2q}.
\]

For a subset $T\subseteq[2q]:=\{1,\dots,2q\}$, write
\[
T=\{t_1<\cdots<t_m\},
\qquad
m=|T|,
\]
and define the square Vandermonde matrix associated with $T$ by
\[
V_T
=
\bigl(x_{t_i}^{j-1}\bigr)_{1\le i,j\le m}.
\]
Its determinant is
\[
\det(V_T)
=
\prod_{1\le i<j\le m}
\bigl(x_{t_j}-x_{t_i}\bigr).
\]
The factorization \eqref{eq:bv-factor} allows us to compute the
determinant of $\Lambda$ systematically by means of the
Cauchy--Binet formula.

\subsection{Determinant}

\begin{theorem}\label{thm:det}
The determinant of the block Vandermonde matrix \eqref{eq:bv} is
\begin{equation}\label{eq:block-vandermonde-det}
\det\Lambda
=
\sum_{\substack{T\subseteq[2q]\\ |T|=q}}
\varepsilon(T)
\left(\prod_{i\in T}\alpha_i\right)
\left(\prod_{j\in T^c}\beta_j\right)
\det(V_T)\det(V_{T^c}),
\end{equation}
where, for each
\begin{equation}\label{TT^c}
T=\{t_1<\cdots<t_q\},
\qquad
T^c=[2q]\setminus T=\{u_1<\cdots<u_q\},
\end{equation}
we set
\[
\varepsilon(T)
:=
(-1)^{\sum_{k=1}^q(t_k-k)}.
\]
Equivalently,
\[
\det\Lambda
=
\sum_{\substack{T\subseteq[2q]\\ |T|=q}}
\varepsilon(T)
\left(\prod_{i\in T}\alpha_i\right)
\left(\prod_{j\in T^c}\beta_j\right)
\prod_{1\le i<j\le q}
\bigl(x_{t_j}-x_{t_i}\bigr)
\prod_{1\le i<j\le q}
\bigl(x_{u_j}-x_{u_i}\bigr).
\]
\end{theorem}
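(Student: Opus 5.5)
We apply the Cauchy--Binet formula to the factorization \eqref{eq:bv-factor}, $\Lambda=BC$ with $B=(D_\alpha\mid D_\beta)\in\mathbb C^{2q\times4q}$ and $C=\operatorname{diag}(V_q,V_q)\in\mathbb C^{4q\times2q}$. This gives
\[
\det\Lambda=\sum_{S\subseteq[4q],\,|S|=2q}\det B_{[2q],S}\,\det C_{S,[2q]},
\]
where $B_{[2q],S}$ is the column submatrix of $B$ indexed by $S$ and $C_{S,[2q]}$ is the row submatrix of $C$ indexed by $S$. Write $S=T\cup(2q+U)$ with $T,U\subseteq[2q]$, so that $T$ records the columns taken from $D_\alpha$ and $U$ those taken from $D_\beta$.

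First we show that most terms vanish. Since $C$ is block diagonal, $C_{S,[2q]}$ is block diagonal with blocks of sizes $|T|\times q$ and $|U|\times q$. Its determinant is therefore zero unless $|T|=|U|=q$, and in that case it equals $\det(V_T)\det(V_U)$. Next, the columns of $B_{[2q],S}$ are $\alpha_t e_t$ for $t\in T$ followed by $\beta_u e_u$ for $u\in U$. If $T\cap U\neq\emptyset$, two of these columns are proportional, so the determinant vanishes. Hence only the terms with $U=T^c$ and $|T|=q$ survive. In those terms the matrix $B_{[2q],S}$ is a permuted diagonal matrix, so
\[
\det B_{[2q],S}=\operatorname{sgn}(\sigma_T)\prod_{i\in T}\alpha_i\prod_{j\in T^c}\beta_j .
\]
Here $\sigma_T$ is the permutation sending $(1,\dots,2q)$ to $(t_1,\dots,t_q,u_1,\dots,u_q)$, with the notation of \eqref{TT^c}.

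The only real computation is the sign. The sign of $\sigma_T$ is $(-1)$ raised to its number of inversions. Every inversion consists of a pair $(t_k,u_l)$ with $t_k>u_l$. The element $t_k$ exceeds exactly $t_k-k$ elements of $T^c$, namely all numbers below $t_k$ except $t_1,\dots,t_{k-1}$. Summing over $k$ gives $\operatorname{sgn}\sigma_T=(-1)^{\sum_k(t_k-k)}=\varepsilon(T)$.

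Substituting the sign, the $\alpha\beta$ product and $\det(V_T)\det(V_{T^c})$ into the Cauchy--Binet sum gives \eqref{eq:block-vandermonde-det}. The equivalent product form then follows from the standard Vandermonde determinant formula for $V_T$ and $V_{T^c}$ recalled before the theorem.
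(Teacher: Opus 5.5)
Your proof is correct and takes essentially the same route as the paper: Cauchy--Binet applied to $\Lambda=BC$, with every term vanishing except $S=T\cup(2q+T^c)$ where $|T|=q$, and the shuffle sign $\varepsilon(T)$. The differences are cosmetic. You eliminate terms via the block structure of $C$ first and then via $B$, whereas the paper does it in the opposite order. Your inversion count ($t_k$ exceeds exactly $t_k-k$ elements of $T^c$) also explicitly justifies the sign that the paper only asserts.
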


\begin{proof}
Applying the Cauchy--Binet formula (see, for example, \cite{Gantmacher1959}) to the factorization
\(\Lambda=BC\) in \eqref{eq:bv-factor}, we have
\[
\det\Lambda
=
\sum_{\substack{S\subseteq[4q]\\ |S|=2q}}
\det(B_{[:,S]})\det(C_{[S,:]}),
\]
where \(B_{[:,S]}\) denotes the submatrix of \(B\) formed by all rows
and the columns indexed by \(S\), while \(C_{[S,:]}\) denotes the
submatrix of \(C\) formed by the rows indexed by \(S\) and all columns.

A subset $S\subseteq[4q]$ with $|S|=2q$ can contribute only if
$B_{[:,S]}$ is nonsingular. Each column of \(B\) is a scalar multiple
of a standard basis vector \(\mathbf e_i\): column \(i\) is
\(\alpha_i\mathbf e_i\), while column \(2q+i\) is
\(\beta_i\mathbf e_i\). Hence \(B_{[:,S]}\) is nonsingular only if
exactly one column is selected from each pair
\(\{i,2q+i\}\), \(i=1,\dots,2q\).

Define
\[
T:=S\cap[2q].
\]
Since exactly one element is selected from each pair
\(\{i,2q+i\}\), \(i\in[2q]\), we have
\[
T^c:=[2q]\setminus T
=
\{i\in[2q]:2q+i\in S\}.
\]
Since the indices in the first block of \(C\) precede those in the
second block, the submatrix \(C_{[S,:]}\) has the block-diagonal form
\[
C_{[S,:]}
=
\begin{pmatrix}
\bigl(x_i^{j-1}\bigr)_{\substack{i\in T\\1\leq j\leq q}} & 0\\
0 &
\bigl(x_i^{j-1}\bigr)_{\substack{i\in T^c\\1\leq j\leq q}}
\end{pmatrix}.
\]
If $C_{[S,:]}$ is nonsingular, then its first $q$ columns must be
linearly independent. Since these columns are supported only in the
first $|T|$ rows, this requires $|T|\ge q$. Likewise, the last $q$
columns are supported only in the remaining $2q-|T|$ rows, so their
linear independence requires
\[
2q-|T|\ge q.
\]
Hence $|T|\le q$, and therefore $|T|=q$. Thus both diagonal blocks are the square Vandermonde matrices
\(V_T\) and \(V_{T^c}\).

For each \(q\)-element subset \(T\subseteq[2q]\), the corresponding
set \(S\) is uniquely determined. If
\[
T=\{t_1<\cdots<t_q\},
\qquad
T^c=\{u_1<\cdots<u_q\},
\]
then the selected columns of \(B\) occur in the order
\[
\alpha_{t_1}\mathbf e_{t_1},\dots,
\alpha_{t_q}\mathbf e_{t_q},
\beta_{u_1}\mathbf e_{u_1},\dots,
\beta_{u_q}\mathbf e_{u_q}.
\]
The sign of the corresponding permutation is
\[
\varepsilon(T)
=
(-1)^{\sum_{k=1}^q(t_k-k)}.
\]
Hence
\[
\det(B_{[:,S]})
=
\varepsilon(T)
\left(\prod_{i\in T}\alpha_i\right)
\left(\prod_{j\in T^c}\beta_j\right),
\]
while
\[
\det(C_{[S,:]})
=
\det(V_T)\det(V_{T^c}).
\]

Summing over all subsets \(T\subseteq[2q]\) with \(|T|=q\) gives
\eqref{eq:block-vandermonde-det}.
\end{proof}

\begin{corollary}\label{cor:invert}
The matrix $\Lambda$ is
invertible if and only if the right-hand side of \eqref{eq:block-vandermonde-det} is
non-zero. In particular, $\Lambda$ is singular whenever $\alpha_i=\beta_i$
for all~$i$.
\end{corollary}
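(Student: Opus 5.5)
The first assertion is immediate. A square matrix is invertible exactly when its determinant is nonzero, and Theorem~\ref{thm:det} identifies $\det\Lambda$ with the right-hand side of \eqref{eq:block-vandermonde-det}. Nothing more is needed there.

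For the second assertion I will not work with the subset sum, where one would have to check that the $\varepsilon(T)$ signs cancel. Instead I will use the structure of $\Lambda$ directly. If $\alpha_i=\beta_i$ for all $i$, then $D_\alpha=D_\beta$, so by \eqref{eq:bv-factor}
\[
\Lambda=\bigl(D_\alpha V_q\mid D_\alpha V_q\bigr).
\]
Column $j$ of $\Lambda$ therefore coincides with column $q+j$ for each $j=1,\dots,q$. A matrix with two equal columns has zero determinant, so $\Lambda$ is singular. Equivalently, the nonzero vector $(\mathbf e_1,-\mathbf e_1)^T\in\mathbb C^{2q}$ lies in the kernel of $\Lambda$.

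As a consistency check against \eqref{eq:block-vandermonde-det}, set $\beta=\alpha$. Every term then carries the same factor $\prod_{i=1}^{2q}\alpha_i$, and what remains is $\sum_T\varepsilon(T)\det V_T\det V_{T^c}$. This is the Laplace expansion, along the first $q$ columns, of the determinant of $(V_q\mid V_q)$, and that determinant is zero. I will mention this only as a remark; the argument does not depend on it.

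No step here is a real obstacle. The only point needing care is to rely on the column-duplication argument rather than on the sign cancellation in the subset sum.
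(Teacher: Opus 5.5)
Your proposal is correct and matches the paper's proof: the first assertion is read off from Theorem~\ref{thm:det}, and singularity for $\alpha_i=\beta_i$ follows because the $j$-th columns of the two blocks in \eqref{eq:bv} coincide. Your Laplace-expansion remark is also sound. It explains why the subset sum vanishes for every $q$, whereas the paper only notes that the terms for $T$ and $T^c$ cancel in pairs when $q$ is odd.
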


\begin{proof}
The invertibility criterion follows immediately from
Theorem~\ref{thm:det}. If $\alpha_i=\beta_i$ for
all~$i$, then the $j$-th columns of the two blocks in \eqref{eq:bv}
coincide, so $\Lambda$ is singular. (Note that this is not visible
term by term in \eqref{eq:block-vandermonde-det}: the terms of $T$ and $T^c$ cancel in
pairs only for odd $q$, since
$\varepsilon(T^c)=(-1)^{q}\varepsilon(T)$.)
\end{proof}

\subsection{Inverse via polynomial interpolation}

\begin{theorem}\label{thm:inv-interp}
Suppose that $\det(\Lambda)\ne0$, and write
\[
\Lambda^{-1}
=
\begin{pmatrix}
Y\\
Z
\end{pmatrix},
\qquad
Y,Z\in\mathbb C^{q\times 2q}.
\]
For each $j=1,\dots,2q$, define the polynomials
\[
Y_j(x)
:=
\sum_{k=0}^{q-1}Y_{k+1,j}x^k,
\qquad
Z_j(x)
:=
\sum_{k=0}^{q-1}Z_{k+1,j}x^k.
\]
Then $(Y_j,Z_j)$ is the unique pair of polynomials of degree at most
$q-1$ satisfying
\begin{equation}\label{eq:interp}
\alpha_iY_j(x_i)+\beta_iZ_j(x_i)
=
\delta_{ij},
\qquad
i=1,\dots,2q.
\end{equation}
\end{theorem}

\begin{proof}
Let $\mathbf y_j$ and $\mathbf z_j$ denote the $j$-th columns of
$Y$ and $Z$, respectively. The $j$-th column of the identity
$\Lambda\Lambda^{-1}=I_{2q}$ gives
\[
D_\alpha V_q\mathbf y_j
+
D_\beta V_q\mathbf z_j
=
\mathbf e_j.
\]
By the definitions of $Y_j$ and $Z_j$, the $i$-th component of this
identity is
\[
\alpha_iY_j(x_i)+\beta_iZ_j(x_i)
=
\delta_{ij},
\]
which proves \eqref{eq:interp}.

Conversely, any pair of polynomials of degree at most $q-1$
satisfying \eqref{eq:interp} determines coefficient vectors
$\mathbf y,\mathbf z\in\mathbb C^q$ satisfying
\[
\Lambda
\binom{\mathbf y}{\mathbf z}
=
\mathbf e_j.
\]
Since $\Lambda$ is invertible, this system has a unique solution.
Hence the pair $(Y_j,Z_j)$ is unique.
\end{proof}

%%%%%%%%%%%%%%%%%%%%%%%%%%%%%%%%%%%%%%%%%%%%%%%%%%%%%%%%%%%%%%%%%%%%%%%%
\subsection{An explicit cofactor formula}

For \(j\in[2q]\), let
\[
R_j:=[2q]\setminus\{j\},
\]
equipped with the natural order inherited from \([2q]\). Define the order
isomorphism
\[
\rho_j:R_j\to[2q-1]
\]
by
\[
\rho_j(i):=
\begin{cases}
i, & i<j,\\
i-1, & i>j.
\end{cases}
\]
For a subset
\[
T=\{t_1<\cdots<t_s\}\subseteq R_j,
\qquad
s=|T|,
\]
let
\[
T':=R_j\setminus T.
\]
We define the shuffle sign \(\varepsilon_j(T)\) by
\begin{equation}\label{eq:epsilon-j}
\varepsilon_j(T)
:=
(-1)^{\sum_{k=1}^s(\rho_j(t_k)-k)}
=
(-1)^{\#\{(u,t)\in T'\times T:\ u<t\}}.
\end{equation}
Equivalently, $\varepsilon_j(T)$ is the sign of the shuffle permutation
of the ordered set $R_j$ that places the elements of $T$ before the
elements of $T'$, while preserving the natural order within each set.

For any index set
\[
S=\{s_1<\cdots<s_d\}\subseteq[2q]
\]
and exponent set
\[
E=\{e_1<\cdots<e_d\}\subset\mathbb N_0,
\]
we denote the generalized Vandermonde matrix by
\[
V_S^E
:=
\bigl(x_{s_i}^{e_k}\bigr)_{1\leq i,k\leq d}.
\]

For variables $y_1,\dots,y_d$, let
\[
e_r(y_1,\dots,y_d)
=
\sum_{1\le i_1<\cdots<i_r\le d}
y_{i_1}\cdots y_{i_r}
\]
denote the elementary symmetric polynomial of degree $r$, with the
conventions
\[
e_0=1,
\qquad
e_r=0 \quad \text{for } r<0 \text{ or } r>d.
\]
For $T\subseteq[2q]$, we write
\[
e_r\!\left((x_t)_{t\in T}\right)
\]
for the elementary symmetric polynomial of degree $r$ in the variables
$x_t$, $t\in T$.

We also adopt the standard convention that empty products and
determinants of \(0\times0\) matrices equal \(1\).

For \(r=1,\dots,q\), set
\[
E_r:=\{0,1,\dots,q-1\}\setminus\{r-1\}.
\]

For \(1\leq j,m\leq2q\), let \(M_{j,m}\) denote the minor obtained
from \(\Lambda\) by deleting row \(j\) and column \(m\). If
\(\det(\Lambda)\neq0\), then the standard cofactor formula gives
\begin{equation}\label{eq:inv-entry}
(\Lambda^{-1})_{m,j}
=
\frac{(-1)^{j+m}}{\det(\Lambda)}\,M_{j,m}.
\end{equation}

\begin{theorem}\label{thm:inv-explicit}
Fix \(j\in[2q]\).

If \(1\leq m\leq q\), then
\begin{equation}\label{eq:Mjm-alpha}
M_{j,m}
=
\sum_{\substack{T\subseteq R_j\\ |T|=q-1}}
\varepsilon_j(T)
\left(\prod_{i\in T}\alpha_i\right)
\left(\prod_{i\in T'}\beta_i\right)
\det\!\left(V_T^{E_m}\right)
\det(V_{T'}),
\end{equation}
where
\begin{equation}\label{eq:generalized-vandermonde-alpha}
\det\!\left(V_T^{E_m}\right)
=
\prod_{\substack{r<s\\ r,s\in T}}(x_s-x_r)\,
e_{q-m}\!\left((x_t)_{t\in T}\right).
\end{equation}

If \(m=q+\ell\), \(1\leq\ell\leq q\), then
\begin{equation}\label{eq:Mjm-beta}
M_{j,q+\ell}
=
\sum_{\substack{T\subseteq R_j\\ |T|=q}}
\varepsilon_j(T)
\left(\prod_{i\in T}\alpha_i\right)
\left(\prod_{i\in T'}\beta_i\right)
\det(V_T)
\det\!\left(V_{T'}^{E_\ell}\right),
\end{equation}
where
\begin{equation}\label{eq:generalized-vandermonde-beta}
\det\!\left(V_{T'}^{E_\ell}\right)
=
\prod_{\substack{r<s\\ r,s\in T'}}(x_s-x_r)\,
e_{q-\ell}\!\left((x_t)_{t\in T'}\right).
\end{equation}
\end{theorem}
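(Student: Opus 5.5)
The plan is to repeat the Cauchy--Binet argument of Theorem~\ref{thm:det}, now applied to the minor. Deleting row $j$ and column $m$ from $\Lambda=BC$ factors the minor as
\[
\Lambda^{(j,m)}=B^{(j)}C^{(m)},
\]
where $B^{(j)}\in\mathbb C^{(2q-1)\times 4q}$ is $B$ with row $j$ removed, and $C^{(m)}\in\mathbb C^{4q\times(2q-1)}$ is $C$ with column $m$ removed. Columns $j$ and $2q+j$ of $B^{(j)}$ are zero, so Cauchy--Binet only sees subsets $S\subseteq[4q]\setminus\{j,2q+j\}$ with $|S|=2q-1$. As before, $\det B^{(j)}_{[:,S]}\ne0$ forces exactly one index to be chosen from each pair $\{i,2q+i\}$, $i\in R_j$. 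Writing $T=S\cap[2q]\subseteq R_j$ and $T'=R_j\setminus T$, this determinant equals
\[
\pm\prod_{i\in T}\alpha_i\prod_{i\in T'}\beta_i .
\]
The sign is that of the permutation sorting the selected columns $\alpha_t\mathbf e_t$ ($t\in T$), $\beta_u\mathbf e_u$ ($u\in T'$) into increasing row order within $R_j$. After relabelling by $\rho_j$, this is exactly the shuffle sign $\varepsilon_j(T)$ of \eqref{eq:epsilon-j}.

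Next I treat $C^{(m)}_{[S,:]}$, which is block diagonal with an upper block of rows indexed by $T$ and a lower block of rows indexed by $T'$. For $1\le m\le q$ the upper block has the $q-1$ columns $x^{e}$, $e\in E_m$, and the lower block has $q$ columns $x^0,\dots,x^{q-1}$. The same column-support counting as in the proof of Theorem~\ref{thm:det} forces $|T|\ge q-1$ and $|T'|\ge q$, and since $|T|+|T'|=2q-1$ this gives $|T|=q-1$, $|T'|=q$. The determinant is then $\det(V_T^{E_m})\det(V_{T'})$, which yields \eqref{eq:Mjm-alpha}. For $m=q+\ell$ the roles swap: $|T|=q$, $|T'|=q-1$, and the lower block is $V_{T'}^{E_\ell}$, which yields \eqref{eq:Mjm-beta}.

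It remains to evaluate the generalized Vandermonde determinants \eqref{eq:generalized-vandermonde-alpha} and \eqref{eq:generalized-vandermonde-beta}. This is the classical fact that deleting the exponent $r-1$ from $\{0,\dots,d\}$, with $d=q-1$ and $d$ variables, gives the Vandermonde determinant times $e_{d-(r-1)}=e_{q-r}$. I would prove it by taking the $(d+1)\times(d+1)$ Vandermonde matrix in $x_{t_1},\dots,x_{t_d},z$. Its determinant is
\[
\prod_{a<b}(x_{t_b}-x_{t_a})\prod_a(z-x_{t_a}).
\]
Expanding along the $z$ row, the coefficient of $z^{r-1}$ is $(-1)^{d+r-1}\det V_T^{E_r}$. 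On the other hand, $\prod_a(z-x_{t_a})$ has $z^{r-1}$-coefficient $(-1)^{d-r+1}e_{d-r+1}$. The two signs agree, so the identity follows.

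The main obstacle is the sign bookkeeping in the first step. The sign of $\det B^{(j)}_{[:,S]}$ must be matched with $\varepsilon_j(T)$ computed relative to $R_j$ rather than $[2q]$. The point to verify is that removing row $j$ and the pair of zero columns leaves a matrix in which every column is a multiple of a standard basis vector of $\mathbb C^{2q-1}$, indexed via $\rho_j$, so the count of inversions is exactly $\sum_k(\rho_j(t_k)-k)$. The factor $(-1)^{j+m}$ plays no role here because it is kept separate in \eqref{eq:inv-entry}.
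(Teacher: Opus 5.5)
Your proposal is correct and follows the paper's proof: Cauchy--Binet applied to the minor, the shuffle sign $\varepsilon_j(T)$ taken relative to $R_j$, and the column-support count that forces $|T|=q-1$ (respectively $|T|=q$). Your explicit factorization $\Lambda^{(j,m)}=B^{(j)}C^{(m)}$, with the zero columns $j$ and $2q+j$, spells out what the paper leaves implicit. The only real difference is in the generalized Vandermonde step. You prove $\det(V_T^{E_r})=\prod_{a<b}(x_{t_b}-x_{t_a})\,e_{q-r}$ directly, by bordering with an extra variable $z$ and comparing coefficients of $z^{r-1}$, and your signs check out. The paper instead cites the alternant formula for Schur polynomials.
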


\begin{proof}
We compute the minors \(M_{j,m}\). First suppose that
\(1\leq m\leq q\), so that the deleted column belongs to the
\(\alpha\)-block.  After deleting row \(j\) and column \(m\), the
remaining rows are indexed by \(R_j\). The remaining \(\alpha\)-block has exponent set \(E_m\), while the
\(\beta\)-block still has exponent set
\[
\{0,1,\dots,q-1\}.
\]

Applying the same Cauchy--Binet argument as in the proof of
Theorem~\ref{thm:det} to the matrix obtained after deleting row \(j\)
and column \(m\), a nonzero term is obtained by assigning \(q-1\)
indices to the \(\alpha\)-block and \(q\) indices to the
\(\beta\)-block. Hence the contributing subsets are precisely
\[
T\subseteq R_j,
\qquad
|T|=q-1.
\]
Then \(|T'|=q\). The indices in \(T\) contribute to the
\(\alpha\)-block, while the indices in \(T'\) contribute to the
\(\beta\)-block.

For a fixed \(T\), the diagonal weights contribute
\[
\left(\prod_{i\in T}\alpha_i\right)
\left(\prod_{i\in T'}\beta_i\right).
\]
The associated shuffle places the elements of \(T\) before the
elements of \(T'\), while preserving the natural order within each
set. By definition, the sign of this shuffle is
\[
\varepsilon_j(T)
=
(-1)^{\sum_{k=1}^{q-1}(\rho_j(t_k)-k)},
\qquad
T=\{t_1<\cdots<t_{q-1}\}.
\]
The polynomial part contributes
\[
\det\!\left(V_T^{E_m}\right)
\qquad\text{and}\qquad
\det(V_{T'}).
\]
Therefore
\[
M_{j,m}
=
\sum_{\substack{T\subseteq R_j\\ |T|=q-1}}
\varepsilon_j(T)
\left(\prod_{i\in T}\alpha_i\right)
\left(\prod_{i\in T'}\beta_i\right)
\det\!\left(V_T^{E_m}\right)
\det(V_{T'}).
\]
This proves \eqref{eq:Mjm-alpha}.

It remains to evaluate \(\det(V_T^{E_m})\). By the classical
alternant formula for Schur polynomials (see, for example,
\cite{Macdonald}), the generalized Vandermonde determinant factors as
the ordinary Vandermonde determinant times the Schur polynomial
associated with the exponent set. For \(E_m\), this Schur polynomial is
the elementary symmetric polynomial \(e_{q-m}\). Hence
\[
\det\!\left(V_T^{E_m}\right)
=
\prod_{\substack{r<s\\ r,s\in T}}(x_s-x_r)\,
e_{q-m}\!\left((x_t)_{t\in T}\right).
\]
This proves \eqref{eq:generalized-vandermonde-alpha}.

Now suppose that
\[
m=q+\ell,
\qquad
1\leq\ell\leq q.
\]
Then the deleted column belongs to the \(\beta\)-block. The
\(\alpha\)-block keeps the full exponent set
\[
\{0,1,\dots,q-1\},
\]
whereas the \(\beta\)-block has exponent set $E_\ell$.
Thus a nonzero Cauchy--Binet term must assign \(q\) indices to the
\(\alpha\)-block and \(q-1\) indices to the \(\beta\)-block. Hence the
contributing subsets satisfy
\[
T\subseteq R_j,
\qquad
|T|=q.
\]
Then \(|T'|=q-1\). The same shuffle-sign argument gives
\[
M_{j,q+\ell}
=
\sum_{\substack{T\subseteq R_j\\ |T|=q}}
\varepsilon_j(T)
\left(\prod_{i\in T}\alpha_i\right)
\left(\prod_{i\in T'}\beta_i\right)
\det(V_T)
\det\!\left(V_{T'}^{E_\ell}\right).
\]
This proves \eqref{eq:Mjm-beta}.

Similarly, the same alternant formula gives
\[
\det\!\left(V_{T'}^{E_\ell}\right)
=
\prod_{\substack{r<s\\ r,s\in T'}}(x_s-x_r)\,
e_{q-\ell}\!\left((x_t)_{t\in T'}\right).
\]
This proves \eqref{eq:generalized-vandermonde-beta} and completes the
proof.
\end{proof}
\begin{remark}
For large $q$ the $\binom{2q}{q}$ terms in
\eqref{eq:block-vandermonde-det} are too numerous for the determinant
formula to be useful for numerical computation. In practice, one may
solve the system
\[
\Lambda\mathbf{u}=\mathbf{e}_j
\]
by Gaussian elimination, which requires $O(q^3)$ operations. For
ordinary Vandermonde systems, specialized algorithms such as the
Bj\"orck--Pereyra algorithm reduce the computational complexity; see,
for example, \cite{BP}. Whether the particular block Vandermonde
structure of \eqref{eq:bv} can be exploited to obtain a comparable
fast algorithm is an interesting numerical question, which we do not
pursue here.
\end{remark}

%%%%%%%%%%%%%%%%%%%%%%%%%%%%%%%%%%%%%%%%%%%%%%%%%%%%%%%%%%%%%%%%%%%%%%%%%%%%%%%%%%%%
%%%%%%%%%%%%%%%%%%%%%%%%%%%%%%KEG-D system%%%%%%%%%%%%%%%%%%%%%%%%%%%%%%%%%%%%%%%%%%%%%%

\section{The KEG-D system with approximate boundary values}\label{sec:keg-d}

\subsection{Setting up the system}\label{ssec:setup}

Let $q\geq1$, let
\[
f\in C^q([-1,1];\mathbb R^2),
\qquad
p,r\in C^q[-1,1],
\]
and assume that $f^{(q)}$ is absolutely continuous.
For sufficiently large $N$, choose $2q$ distinct indices
$n_1,\dots,n_{2q}\in\mathbb{Z}\setminus\{0\}$ satisfying
\begin{equation}\label{eq:ns-range}
\alpha_0 N\leq |n_s|\leq N,
\qquad
s=1,\dots,2q,
\end{equation}
for some fixed constant $0<\alpha_0<1$, and form the following linear
system, obtained by discarding the remainder term $F_{n_s}$ in
\eqref{eq:cn-decomp}--\eqref{eq:PnFn}:
\begin{equation}\label{eq:keg-d-system}
c_{n_s}
=
x_{n_s}\sum_{k=0}^{q-1}x_{n_s}^{k}
\bigl[
\kappa_{n_s}^{(+)}\widetilde g_k^+
-
\kappa_{n_s}^{(-)}\widetilde g_k^-
\bigr],
\qquad
s=1,\dots,2q.
\end{equation}

Introducing the polynomial pair
\[
\widetilde G^+(x)
:=
\sum_{k=0}^{q-1}\widetilde g_k^+x^k,
\qquad
\widetilde G^-(x)
:=
\sum_{k=0}^{q-1}\widetilde g_k^-x^k,
\qquad
\deg\widetilde G^\pm\leq q-1,
\]
we can rewrite the system \eqref{eq:keg-d-system} as the interpolation
problem
\begin{equation}\label{eq:keg-d-poly}
\kappa_{n_s}^{(+)}\widetilde G^+(x_{n_s})
-
\kappa_{n_s}^{(-)}\widetilde G^-(x_{n_s})
=
\lambda_{n_s}c_{n_s},
\qquad
s=1,\dots,2q.
\end{equation}
The coefficient matrix of \eqref{eq:keg-d-poly} is
\begin{equation}\label{eq:keg-d-matrix}
\Lambda_N
:=
\begin{pmatrix}
\kappa_{n_1}^{(+)}
&
\kappa_{n_1}^{(+)}x_{n_1}
&
\cdots
&
\kappa_{n_1}^{(+)}x_{n_1}^{q-1}
&
-\kappa_{n_1}^{(-)}
&
-\kappa_{n_1}^{(-)}x_{n_1}
&
\cdots
&
-\kappa_{n_1}^{(-)}x_{n_1}^{q-1}
\\
\vdots & \vdots & & \vdots
&
\vdots & \vdots & & \vdots
\\
\kappa_{n_{2q}}^{(+)}
&
\kappa_{n_{2q}}^{(+)}x_{n_{2q}}
&
\cdots
&
\kappa_{n_{2q}}^{(+)}x_{n_{2q}}^{q-1}
&
-\kappa_{n_{2q}}^{(-)}
&
-\kappa_{n_{2q}}^{(-)}x_{n_{2q}}
&
\cdots
&
-\kappa_{n_{2q}}^{(-)}x_{n_{2q}}^{q-1}
\end{pmatrix}.
\end{equation}
Thus, $\Lambda_N$ is the block Vandermonde matrix
\eqref{eq:bv} with the identification
\begin{equation}\label{eq:identification}
\alpha_s:=\kappa_{n_s}^{(+)},
\qquad
\beta_s:=-\kappa_{n_s}^{(-)},
\qquad
x_s:=x_{n_s}.
\end{equation}

Since $\lambda_{n_s}\neq0$, multiplying the $s$-th equation of
\eqref{eq:keg-d-system} by $\lambda_{n_s}$ does not change its
solvability. Thus \eqref{eq:keg-d-system} and
\eqref{eq:keg-d-poly} are equivalent linear systems, and
$\Lambda_N$ is the coefficient matrix of the latter.
Theorem~\ref{thm:det} therefore gives an explicit criterion for the
unique solvability of \eqref{eq:keg-d-system}. By
Theorem~\ref{thm:inv-interp}, when $\Lambda_N$ is invertible, the
solution is the unique pair of polynomials $\widetilde G^\pm$ of
degree at most $q-1$ satisfying \eqref{eq:keg-d-poly}.

\subsection{Error polynomials}\label{ssec:err}

Comparing \eqref{eq:keg-d-system} with
\eqref{eq:cn-decomp}--\eqref{eq:PnFn}, we see that the error polynomials
\[
\Delta^\pm(x):=\sum_{k=0}^{q-1}\bigl(\widetilde g_k^\pm-g_k^\pm\bigr)x^k
\quad(\deg\le q-1)
\]
satisfy
\begin{equation}\label{eq:Delta-eq}
\kappa_{n_s}^{(+)}\Delta^+(x_{n_s})
-\kappa_{n_s}^{(-)}\Delta^-(x_{n_s})
=\lambda_{n_s}F_{n_s},
\qquad s=1,\dots,2q.
\end{equation}
From \eqref{eq:Delta-eq} and \eqref{eq:Fn-asymp}, setting
\begin{equation}\label{eq:Phi-def}
\Phi^\pm(x):=\Delta^\pm(x)-g_q^\pm x^q\qquad(\deg\le q),
\end{equation}
the system becomes the almost homogeneous system
\begin{equation}\label{eq:Phi-eq}
\kappa_{n_s}^{(+)}\Phi^+(x_{n_s})-\kappa_{n_s}^{(-)}\Phi^-(x_{n_s})=\varepsilon_s,
\qquad s=1,\dots,2q,
\end{equation}
where, by \eqref{eq:Fn-asymp} and \eqref{eq:ns-range},
\begin{equation}\label{eq:eps-small}
\varepsilon_s=\lambda_{n_s}\bigl(F_{n_s}
   -x_{n_s}^{q+1}[\kappa_{n_s}^{(+)}g_q^+-\kappa_{n_s}^{(-)}g_q^-]\bigr)
 =o(N^{-q})\qquad\text{uniformly in }s.
\end{equation}
The leading-order behavior of $\Delta^\pm$ is thus governed by how
small the left-hand side of \eqref{eq:Phi-eq} can be, and this depends
on the parity of the indices $n_s$, which we study in the next
subsection.

\subsection{Parity classification and kernel basis}\label{ssec:parity}

By \eqref{eq:kappapm-asymp},
\begin{equation}\label{eq:kappa-leading}
\kappa_n^{(+)}
=
(-1)^nK_0+O\!\left(\frac{1}{n}\right),
\qquad
-\kappa_n^{(-)}
=
-K_0+O\!\left(\frac{1}{n}\right),
\end{equation}
where \(K_0=1/\sqrt{2}\). We partition the selected indices according
to parity:
\[
E:=\{s\in[2q]:n_s\text{ is even}\},
\qquad
O:=\{s\in[2q]:n_s\text{ is odd}\},
\]
and set
\[
q_e:=|E|,
\qquad
q_o:=|O|,
\qquad
q_e+q_o=2q.
\]

We now examine the leading-order structure of the KEG-D coefficient
matrix \(\Lambda_N\). In view of \eqref{eq:kappa-leading}, define
\begin{equation}\label{eq:Lambda0}
\Lambda_0
:=
K_0
\begin{pmatrix}
(-1)^{n_1}
&
(-1)^{n_1}x_{n_1}
&
\cdots
&
(-1)^{n_1}x_{n_1}^{q-1}
&
-1
&
-x_{n_1}
&
\cdots
&
-x_{n_1}^{q-1}
\\
\vdots & \vdots & & \vdots
&
\vdots & \vdots & & \vdots
\\
(-1)^{n_{2q}}
&
(-1)^{n_{2q}}x_{n_{2q}}
&
\cdots
&
(-1)^{n_{2q}}x_{n_{2q}}^{q-1}
&
-1
&
-x_{n_{2q}}
&
\cdots
&
-x_{n_{2q}}^{q-1}
\end{pmatrix}.
\end{equation}
Thus, \(\Lambda_0\) is obtained from \(\Lambda_N\) by replacing
\(\kappa_{n_s}^{(+)}\) and \(-\kappa_{n_s}^{(-)}\) by their leading
terms
\[
(-1)^{n_s}K_0
\qquad\text{and}\qquad
-K_0,
\]
respectively. The notation \(\Lambda_0\) refers to this leading-order
replacement; the matrix still depends on \(N\) through the nodes
\(x_{n_s}\).

We determine which parity distribution makes \(\Lambda_0\)
non-singular. For each exponent \(j=0,\dots,q-1\), consider the
corresponding pair of columns
\[
K_0\bigl((-1)^{n_s}x_{n_s}^j\bigr)_{s=1}^{2q},
\qquad
-K_0\bigl(x_{n_s}^j\bigr)_{s=1}^{2q},
\]
and replace them by their sum and difference. The determinant of each
such two-column transformation is \(-2\), so the rank is unchanged.

For the sum, the \(s\)-th component is
\[
K_0\bigl((-1)^{n_s}-1\bigr)x_{n_s}^j
=
\begin{cases}
0, & s\in E,\\
-2K_0x_{n_s}^j, & s\in O,
\end{cases}
\]
whereas for the difference it is
\[
K_0\bigl((-1)^{n_s}+1\bigr)x_{n_s}^j
=
\begin{cases}
2K_0x_{n_s}^j, & s\in E,\\
0, & s\in O.
\end{cases}
\]
Hence, after a permutation of rows and columns, the transformed matrix
has the block-diagonal form
\[
\begin{pmatrix}
-2K_0V_O & 0\\
0 & 2K_0V_E
\end{pmatrix},
\]
where \(V_O\) is the \(q_o\times q\) Vandermonde matrix on the nodes
\(x_{n_s}\), \(s\in O\), and \(V_E\) is the \(q_e\times q\)
Vandermonde matrix on the nodes \(x_{n_s}\), \(s\in E\).

For \(\Lambda_0\) to have full rank \(2q\), both \(V_O\) and \(V_E\)
must have rank \(q\). Hence necessarily
\[
q_o\geq q,
\qquad
q_e\geq q.
\]
Since
\[
q_e+q_o=2q,
\]
these two inequalities hold simultaneously if and only if
\begin{equation}\label{eq:parity-balance}
q_e=q_o=q.
\end{equation}

Conversely, under \eqref{eq:parity-balance}, both \(V_E\) and \(V_O\)
are ordinary \(q\times q\) Vandermonde matrices. Since the indices
\(n_1,\dots,n_{2q}\) are distinct, the eigenvalues
\(\lambda_{n_1},\dots,\lambda_{n_{2q}}\), and hence the nodes
\(x_{n_1},\dots,x_{n_{2q}}\), are pairwise distinct. Therefore both
\(V_E\) and \(V_O\) are non-singular. Since the row and column
transformations used above are invertible, it follows that
\[
\det\Lambda_0\neq0.
\]

Thus, the strict parity balance \eqref{eq:parity-balance} is necessary
and sufficient for the leading-order matrix \(\Lambda_0\) to be
non-singular. This conclusion concerns \(\Lambda_0\) only; the
invertibility of the actual KEG-D coefficient matrix \(\Lambda_N\)
requires a separate comparison with \(\Lambda_0\). In particular,
\eqref{eq:parity-balance} explains the choice of exactly \(q\) even
and \(q\) odd indices.

We now study the leading-order structure of the error system
\eqref{eq:Phi-eq}. Replacing
\(\kappa_{n_s}^{(+)}\) and \(\kappa_{n_s}^{(-)}\) by their leading
terms from \eqref{eq:kappapm-asymp} and dividing by \(K_0\), we are
led to the linear evaluation map
\[
\mathcal E:
\mathcal P_q\times\mathcal P_q
\longrightarrow
\mathbb C^{2q},
\]
defined by
\[
\mathcal E(\Phi^+,\Phi^-)
:=
\bigl(
(-1)^{n_s}\Phi^+(x_{n_s})
-
\Phi^-(x_{n_s})
\bigr)_{s=1}^{2q},
\]
where \(\mathcal P_q\) denotes the space of polynomials of degree at
most \(q\).

A pair \((\Phi^+,\Phi^-)\) belongs to \(\ker\mathcal E\) if and only
if
\begin{equation}\label{eq:WW'}
\begin{aligned}
W_E(x_{n_s})&=0, && s\in E,\\
W_O(x_{n_s})&=0, && s\in O,
\end{aligned}
\end{equation}
where
\begin{equation}\label{eq:WW'-def}
W_E(x):=\Phi^+(x)-\Phi^-(x),
\qquad
W_O(x):=-\Phi^+(x)-\Phi^-(x).
\end{equation}
Both \(W_E\) and \(W_O\) have degree at most \(q\). We also define
\begin{equation}\label{eq:Pi-EO}
\Pi_E(x):=\prod_{s\in E}(x-x_{n_s}),
\qquad
\Pi_O(x):=\prod_{s\in O}(x-x_{n_s}).
\end{equation}

\begin{lemma}[Kernel dimension]\label{lem:kerdim}
The kernel of \(\mathcal E\) has dimension \(2\) if and only if
\[
q_e\leq q+1,
\qquad
q_o\leq q+1.
\]
\end{lemma}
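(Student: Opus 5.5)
The plan is to reduce the kernel of \(\mathcal E\) to a pair of independent one-polynomial vanishing problems, via the linear change of variables \eqref{eq:WW'-def}. The map \((\Phi^+,\Phi^-)\mapsto(W_E,W_O)=(\Phi^+-\Phi^-,\,-\Phi^+-\Phi^-)\) is an isomorphism of \(\mathcal P_q\times\mathcal P_q\) onto itself. Its inverse is \(\Phi^+=(W_E-W_O)/2\), \(\Phi^-=-(W_E+W_O)/2\). By \eqref{eq:WW'}, this isomorphism carries \(\ker\mathcal E\) onto the product
\[
\mathcal K_E\times\mathcal K_O,\qquad
\mathcal K_E:=\{W\in\mathcal P_q:\ W(x_{n_s})=0,\ s\in E\},\quad
\mathcal K_O:=\{W\in\mathcal P_q:\ W(x_{n_s})=0,\ s\in O\}.
\]
Consequently \(\dim\ker\mathcal E=\dim\mathcal K_E+\dim\mathcal K_O\), and each summand can be computed on its own.

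Next I compute \(\dim\mathcal K_E\). As noted before the lemma, the indices \(n_s\) are distinct, so the nodes \(x_{n_s}\) are pairwise distinct. Suppose \(q_e\le q+1\). Then a polynomial of degree at most \(q\) vanishing at the \(q_e\) nodes is exactly a multiple \(\Pi_E\cdot h\), with \(\Pi_E\) as in \eqref{eq:Pi-EO} and \(\deg h\le q-q_e\). Hence \(\dim\mathcal K_E=q+1-q_e\). Equivalently, the \(q_e\times(q+1)\) Vandermonde evaluation matrix has full row rank \(q_e\). This formula covers the case \(q_e=q+1\), where the kernel is \(\{0\}\). If instead \(q_e>q+1\), then \(\mathcal K_E=\{0\}\), because a nonzero polynomial of degree at most \(q\) has at most \(q\) roots. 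In both cases \(\dim\mathcal K_E=\max(q+1-q_e,0)\), and likewise \(\dim\mathcal K_O=\max(q+1-q_o,0)\).

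Finally I combine the two counts using \(q_e+q_o=2q\). If both \(q_e\le q+1\) and \(q_o\le q+1\), the dimension is \((q+1-q_e)+(q+1-q_o)=2q+2-2q=2\). Conversely, suppose one of the inequalities fails, say \(q_e\ge q+2\). Then \(q_o=2q-q_e\le q-2\), so the dimension is \(0+(q+1-q_o)=q_e-q+1\). This exceeds \(2\) because \(q_e\ge q+2\) gives \(q_e-q+1\ge3\). The case \(q_o\ge q+2\) is symmetric. This proves the equivalence.

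There is no real obstacle here. The only points needing care are the bookkeeping of the \(\max(\cdot,0)\) in the unbalanced case, and the distinctness of the nodes, which is needed for the Vandermonde rank statement.
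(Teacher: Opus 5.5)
Your proof is correct and follows essentially the same route as the paper: the same isomorphism \((\Phi^+,\Phi^-)\mapsto(W_E,W_O)\), the count \(\dim\ker\mathcal E=\max\{0,q+1-q_e\}+\max\{0,q+1-q_o\}\), and the same case split using \(q_e+q_o=2q\). The factorization \(W=\Pi_E\cdot h\) with \(\deg h\le q-q_e\) spells out the dimension count, which the paper states without comment.
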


\begin{proof}
The correspondence
\[
(\Phi^+,\Phi^-)
\longleftrightarrow
(W_E,W_O)
\]
defined by \eqref{eq:WW'-def} is a linear bijection of
\(\mathcal P_q\times\mathcal P_q\) onto itself.

For a pair \((\Phi^+,\Phi^-)\in\ker\mathcal E\), the corresponding
polynomial \(W_E\in\mathcal P_q\) must vanish at the \(q_e\) distinct
nodes \(x_{n_s}\), \(s\in E\).
Therefore, the space of possible \(W_E\) has dimension
\[
\max\{0,q+1-q_e\}.
\]
Similarly, the space of possible \(W_O\) has dimension
\[
\max\{0,q+1-q_o\}.
\]
Hence
\[
\dim\ker\mathcal E
=
\max\{0,q+1-q_e\}
+
\max\{0,q+1-q_o\}.
\]

If
\[
q_e\leq q+1,
\qquad
q_o\leq q+1,
\]
then
\[
\dim\ker\mathcal E
=
(q+1-q_e)+(q+1-q_o)
=
2,
\]
because \(q_e+q_o=2q\).

If, for example, \(q_e\geq q+2\), then the first term vanishes and
\[
\dim\ker\mathcal E
=
q+1-q_o.
\]
Since \(q_o=2q-q_e\),
\[
q+1-q_o
=
q_e-q+1
\geq3.
\]
The case \(q_o\geq q+2\) is analogous. This proves the claim.
\end{proof}

Under the strict-balance condition \eqref{eq:parity-balance},
\[
|E|=|O|=q.
\]
Hence, for every \((\Phi^+,\Phi^-)\in\ker\mathcal E\),
\[
W_E=c_E\Pi_E,
\qquad
W_O=c_O\Pi_O,
\]
for some constants \(c_E,c_O\). Since \(\Pi_E\) and \(\Pi_O\) are
monic of degree \(q\), if
\[
\Phi^\pm(x)=\sum_{k=0}^q\Phi_k^\pm x^k,
\]
then by \eqref{eq:WW'-def},
\[
c_E=\Phi_q^+-\Phi_q^-,
\qquad
c_O=-\Phi_q^+-\Phi_q^-.
\]
Thus, if
\[
(\Phi_q^+,\Phi_q^-)=(g^+,g^-),
\]
then
\[
W_E=(g^+-g^-)\Pi_E,
\qquad
W_O=(-g^+-g^-)\Pi_O.
\]
Conversely, these formulas determine \(\Phi^\pm\) uniquely through
\[
\Phi^+=\frac{W_E-W_O}{2},
\qquad
\Phi^-=-\frac{W_E+W_O}{2}.
\]
Therefore, the map
\[
(\Phi^+,\Phi^-)\longmapsto(\Phi_q^+,\Phi_q^-)
\]
is a linear bijection from \(\ker\mathcal E\) onto \(\mathbb C^2\).

We may therefore define \((U^+,U^-)\) to be the unique element of
\(\ker\mathcal E\) satisfying
\[
U_q^+=1,
\qquad
U_q^-=0,
\]
and \((V^+,V^-)\) to be the unique element satisfying
\[
V_q^+=0,
\qquad
V_q^-=1,
\]
where \(U_q^\pm\) and \(V_q^\pm\) denote the coefficients of \(x^q\)
in \(U^\pm\) and \(V^\pm\), respectively.

By \eqref{eq:Phi-def},
\[
\Phi^\pm(x)
=
\Delta^\pm(x)-g_q^\pm x^q,
\]
and since \(\deg\Delta^\pm\leq q-1\), the coefficients of \(x^q\) in
\(\Phi^\pm\) are
\[
\Phi_q^+=-g_q^+,
\qquad
\Phi_q^-=-g_q^-.
\]
Hence the kernel element having the same leading coefficients as the
error pair \((\Phi^+,\Phi^-)\) is
\[
-g_q^+(U^+,U^-)-g_q^-(V^+,V^-).
\]
This suggests the approximation
\[
\Phi^\pm(x)
\approx
-g_q^+U^\pm(x)-g_q^-V^\pm(x),
\]
which is made precise in Lemma~\ref{lem:stab} and
Theorem~\ref{thm:gjumps} below.

The marginal distributions
\[
(q_e,q_o)=(q+1,q-1)
\qquad\text{or}\qquad
(q_e,q_o)=(q-1,q+1)
\]
also give
\[
\dim\ker\mathcal E=2,
\]
but they do not satisfy the leading-order non-singularity condition
\eqref{eq:parity-balance}. They will therefore be considered
separately in Case~B below.

%%%%%%%%%%%%%%%%%%%%%%%%%%%%%%%%%%%%%%%%%%%%%%%%%%%%%%%%%%%%%%%%%%%%%%%%%%

\subsubsection*{Case A: strict balance \(q_e=q_o=q\)}

\begin{lemma}\label{lem:UV-balanced}
Under the strict-balance condition \(q_e=q_o=q\),
\begin{equation}\label{eq:UV-explicit}
\begin{aligned}
U^+(x)
&=
\frac{1}{2}\bigl[\Pi_E(x)+\Pi_O(x)\bigr],
&
U^-(x)
&=
-\frac{1}{2}\bigl[\Pi_E(x)-\Pi_O(x)\bigr],
\\[2pt]
V^+(x)
&=
-\frac{1}{2}\bigl[\Pi_E(x)-\Pi_O(x)\bigr],
&
V^-(x)
&=
\frac{1}{2}\bigl[\Pi_E(x)+\Pi_O(x)\bigr].
\end{aligned}
\end{equation}
\end{lemma}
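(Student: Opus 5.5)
Since the discussion preceding the lemma shows that, under $q_e=q_o=q$, the map $(\Phi^+,\Phi^-)\mapsto(\Phi^+_q,\Phi^-_q)$ is a linear bijection from $\ker\mathcal E$ onto $\mathbb C^2$, it suffices to check two things for each of the proposed pairs in \eqref{eq:UV-explicit}: that it lies in $\ker\mathcal E$, and that its leading coefficients are $(1,0)$ for $(U^+,U^-)$ and $(0,1)$ for $(V^+,V^-)$. Uniqueness then identifies the candidates with $(U^+,U^-)$ and $(V^+,V^-)$.

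For membership in the kernel, I will use the characterization \eqref{eq:WW'}: a pair lies in $\ker\mathcal E$ exactly when $W_E=\Phi^+-\Phi^-$ vanishes at the even nodes and $W_O=-\Phi^+-\Phi^-$ vanishes at the odd nodes. For the candidate $U$ I compute
\[
W_E=\tfrac12(\Pi_E+\Pi_O)+\tfrac12(\Pi_E-\Pi_O)=\Pi_E,
\qquad
W_O=-\tfrac12(\Pi_E+\Pi_O)+\tfrac12(\Pi_E-\Pi_O)=-\Pi_O.
\]
By \eqref{eq:Pi-EO}, $\Pi_E$ vanishes at $x_{n_s}$ for $s\in E$ and $\Pi_O$ vanishes at $x_{n_s}$ for $s\in O$, so the pair is in the kernel. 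For $V$, the same computation gives $W_E=-\Pi_E$ and $W_O=-\Pi_O$, with the same conclusion. This agrees with the earlier formulas $W_E=(g^+-g^-)\Pi_E$ and $W_O=(-g^+-g^-)\Pi_O$ at $(g^+,g^-)=(1,0)$ and $(0,1)$. Equivalently, one can derive the candidates directly by substituting these $W$'s into $\Phi^+=(W_E-W_O)/2$ and $\Phi^-=-(W_E+W_O)/2$.

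For the leading coefficients, $\Pi_E$ and $\Pi_O$ are both monic of degree exactly $q$ because $q_e=q_o=q$. Hence $\Pi_E+\Pi_O$ has $x^q$-coefficient $2$, while $\Pi_E-\Pi_O$ has degree at most $q-1$. It follows that $U^+_q=1$ and $U^-_q=0$, and likewise $V^+_q=0$ and $V^-_q=1$. All four polynomials have degree at most $q$, so both pairs lie in $\mathcal P_q\times\mathcal P_q$.

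The argument is a direct verification; the only step needing care is the sign bookkeeping in $W_O=-\Phi^+-\Phi^-$, which I will double-check against the inversion formulas.
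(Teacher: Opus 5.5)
Your proof is correct and essentially the same as the paper's. The paper substitutes $W_E=(g^+-g^-)\Pi_E$, $W_O=(-g^+-g^-)\Pi_O$ at $(g^+,g^-)=(1,0)$ and $(0,1)$ into $\Phi^+=(W_E-W_O)/2$, $\Phi^-=-(W_E+W_O)/2$, which is the derivation you note as an alternative; your verify-then-invoke-uniqueness version runs the same computation in reverse.
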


\begin{proof}
For a kernel element with coefficients of \(x^q\) given by
\((g^+,g^-)\), the preceding characterization of
\(\ker\mathcal E\) gives
\[
W_E=(g^+-g^-)\Pi_E,
\qquad
W_O=(-g^+-g^-)\Pi_O.
\]
For \((U^+,U^-)\), we have \((g^+,g^-)=(1,0)\), and hence
\[
W_E=\Pi_E,
\qquad
W_O=-\Pi_O.
\]
For \((V^+,V^-)\), we have \((g^+,g^-)=(0,1)\), and hence
\[
W_E=-\Pi_E,
\qquad
W_O=-\Pi_O.
\]
Using \eqref{eq:WW'-def}, equivalently
\[
\Phi^+=\frac{W_E-W_O}{2},
\qquad
\Phi^-=-\frac{W_E+W_O}{2},
\]
gives \eqref{eq:UV-explicit}.
\end{proof}

\subsubsection*{Case B: marginal imbalance \(q_e=q+1\) or \(q_o=q+1\)}

Suppose first that
\[
q_e=q+1,
\qquad
q_o=q-1.
\]
Since \(W_E\in\mathcal P_q\) vanishes at \(q+1\) distinct nodes,
\(W_E\equiv0\). Hence, by \eqref{eq:WW'-def},
\(\Phi^+=\Phi^-\), and therefore
\[
\Phi_q^+=\Phi_q^-.
\]
Using \eqref{eq:Phi-def}, this gives the compatibility condition
\[
g_q^+=g_q^-,
\]
which need not hold for a general \(f\). Thus the leading-order system
is degenerate; equivalently, \(\Lambda_0\) is singular.

Any invertibility of the full KEG-D matrix must therefore come from the
\(O(1/n)\)-corrections in \eqref{eq:kappa-leading}. For \(p=r=0\),
these corrections vanish, so the coefficient matrix remains singular
for every \(N\).

The case \(q_o=q+1\), \(q_e=q-1\) is analogous. Hence the subsequent
analysis is restricted to the strict-balance condition \(q_e=q_o=q\).

Finally, for odd \(q\), the symmetric choice
\[
\pm N,\ \pm(N-1),\dots
\]
falls into the marginal case: among the \(2q\) selected indices,
\(q+1\) have the same parity as \(N\), while \(q-1\) have the opposite
parity.

\subsection{Asymptotic estimates for $\widetilde g_k^\pm-g_k^\pm$}

We now restrict to the strictly balanced case and assume, in addition,
that the limiting nodes are distinct within each parity class. Under
these assumptions, the following lemma shows that a solution of
\eqref{eq:Phi-eq} remains quantitatively close to the unique element of
$\ker\mathcal E$ whose coefficients of $x^q$ are $-g_q^+$ and
$-g_q^-$.

\begin{lemma}[Stability]\label{lem:stab}
Assume $q_e=q_o=q$, and let the indices satisfy \eqref{eq:ns-range}
with
\[
\lim_{N\to\infty}\frac{n_s}{N}=c_s\neq0,
\qquad
s=1,\dots,2q.
\]
Assume that the numbers $\{c_s:s\in E\}$ are pairwise distinct, as are
$\{c_s:s\in O\}$. Let
\[
\Phi^\pm(x)=\sum_{k=0}^q\Phi_k^\pm x^k
\]
satisfy \eqref{eq:Phi-eq} with
\[
\Phi_q^\pm=-g_q^\pm,
\]
and put
\[
\varepsilon:=\max_s|\varepsilon_s|.
\]
Then, for all sufficiently large $N$,
\begin{equation}\label{eq:stab-bound}
\Phi_k^\pm+g_q^+U_k^\pm+g_q^-V_k^\pm
=
O\!\left(N^k\varepsilon+N^{k-q-1}\right),
\qquad
k=0,\dots,q-1,
\end{equation}
where $U_k^\pm$ and $V_k^\pm$ denote the coefficients of $x^k$ in
the polynomials $U^\pm$ and $V^\pm$ given by
\eqref{eq:UV-explicit}. Moreover, the matrix $\Lambda_N$ is
invertible for all sufficiently large $N$.
\end{lemma}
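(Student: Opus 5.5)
Subtract the predicted kernel element and study the remainder in rescaled variables. Put
\[
\Psi^\pm:=\Phi^\pm+g_q^+U^\pm+g_q^-V^\pm .
\]
Since $U^\pm,V^\pm$ have $x^q$-coefficients $(1,0)$ and $(0,1)$, and $\Phi_q^\pm=-g_q^\pm$, the polynomials $\Psi^\pm$ have degree at most $q-1$. The estimate \eqref{eq:stab-bound} is then a bound on the coefficients $\Psi_k^\pm$.

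Substituting into \eqref{eq:Phi-eq} and writing $\kappa_{n_s}^{(+)}=K_0(-1)^{n_s}+\delta_s^+$, $\kappa_{n_s}^{(-)}=K_0+\delta_s^-$ with $\delta_s^\pm=O(1/N)$ by \eqref{eq:kappa-leading} and \eqref{eq:ns-range}, we get, for $s=1,\dots,2q$,
\[
\kappa_{n_s}^{(+)}\Psi^+(x_{n_s})-\kappa_{n_s}^{(-)}\Psi^-(x_{n_s})
=\varepsilon_s+g_q^+\bigl[\kappa_{n_s}^{(+)}U^+(x_{n_s})-\kappa_{n_s}^{(-)}U^-(x_{n_s})\bigr]+g_q^-\bigl[\kappa_{n_s}^{(+)}V^+(x_{n_s})-\kappa_{n_s}^{(-)}V^-(x_{n_s})\bigr].
\]
Because $(U^\pm),(V^\pm)\in\ker\mathcal E$, the leading parts cancel. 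The brackets reduce to $\delta_s^+U^+(x_{n_s})-\delta_s^-U^-(x_{n_s})$ and its $V$-analogue.

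By Lemma~\ref{lem:UV-balanced}, $U^\pm$ and $V^\pm$ are combinations of $\Pi_E$ and $\Pi_O$. At a node $x_{n_s}$ with $s\in E$, $\Pi_E$ vanishes. The other product $\Pi_O(x_{n_s})$ is a product of $q$ differences of nodes of size $O(1/N)$, hence $O(N^{-q})$. The same holds for $s\in O$. So the right-hand side is $O(\varepsilon+N^{-q-1})$ uniformly in $s$.

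The system for $\Psi^\pm$ is $\Lambda_N$ applied to the coefficient vector $(\Psi_0^+,\dots,\Psi_{q-1}^+,\Psi_0^-,\dots,\Psi_{q-1}^-)$. To make it well conditioned, rescale $y=Nx$. Put $D=\operatorname{diag}(1,N^{-1},\dots,N^{1-q})$ in each block and write
\[
\Lambda_N=\widehat\Lambda_N\,\operatorname{diag}(D,D),
\]
where $\widehat\Lambda_N$ is the block Vandermonde matrix \eqref{eq:bv} with nodes $y_s=Nx_{n_s}$. By \eqref{eq:lam-asymp}, $y_s\to 2/(\pi c_s)$, and these limits are pairwise distinct within each parity class by hypothesis. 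Also $\alpha_s\to K_0(-1)^{n_s}$ and $\beta_s\to-K_0$ along subsequences of constant parity pattern. Since there are finitely many parity patterns, pass to subsequences.

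Then $\widehat\Lambda_N\to\widehat\Lambda_\infty$, which is the leading-order matrix $\Lambda_0$ of \eqref{eq:Lambda0} with the limit nodes. The sum/difference column argument of Subsection~\ref{ssec:parity} shows
\[
\det\widehat\Lambda_\infty=\pm(2K_0)^{2q}\det V_E^\infty\det V_O^\infty\neq0 ,
\]
because the limit nodes are distinct within each class. Equivalently, apply Theorem~\ref{thm:det}. By continuity of the determinant, $\widehat\Lambda_N$ is invertible with $\|\widehat\Lambda_N^{-1}\|$ bounded for large $N$. Hence $\Lambda_N$ is invertible, since $\operatorname{diag}(D,D)$ is.

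Finally, solve the system: $\operatorname{diag}(D,D)\Psi=\widehat\Lambda_N^{-1}(\text{rhs})=O(\varepsilon+N^{-q-1})$. This gives $N^{-k}\Psi_k^\pm=O(\varepsilon+N^{-q-1})$, which is exactly \eqref{eq:stab-bound}. The subsequence argument upgrades to the full sequence: every subsequence has a further subsequence with uniform constants, and there are finitely many parity patterns, each with its own nonsingular limit.

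The main obstacle is the uniform-in-$N$ boundedness of $\widehat\Lambda_N^{-1}$. This requires checking that the $O(1/n)$ corrections in $\kappa^{(\pm)}$ and in $\lambda_n$ are compatible with the rescaling and do not spoil the limit. A second delicate point is the estimate $\Pi_O(x_{n_s})=O(N^{-q})$: it uses only $|x_{n_s}-x_{n_t}|\le C/N$, which follows from \eqref{eq:ns-range}.
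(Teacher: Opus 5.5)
Your proof is correct, and it takes a somewhat different route from the paper's.

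The paper works with $R_E=\Psi^+-\Psi^-$ and $R_O=-\Psi^+-\Psi^-$, which are your $\Psi^\pm$ after a fixed linear change. It writes the node equations so that the $O(N^{-1})$ corrections $\delta_s^\pm$ multiply the \emph{unknown} values $\Phi^\pm(x_{n_s})$. It then controls $R_E$ and $R_O$ separately by Lagrange interpolation on each parity class, using the node bounds \eqref{eq:node-bounds}, and closes with the absorption inequality $K_0m\le\varepsilon+C_4N^{-1}(N^{-q}+m)$. Invertibility of $\Lambda_N$ comes from a separate pass of the same absorption on the homogeneous system.

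You instead let the $\delta_s^\pm$ act only on the known kernel element, so the right-hand side is $O(\varepsilon+N^{-q-1})$ at once, and you keep the full perturbed matrix on the left. You then get invertibility and the coefficient bounds together from $\Lambda_N=\widehat\Lambda_N\operatorname{diag}(D,D)$ and continuity of the inverse at the nonsingular limit $\widehat\Lambda_\infty$. That limit is just the parity-balance computation of Subsection~\ref{ssec:parity} evaluated at the limiting nodes.

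The two arguments encode the same facts. The paper's bounds $\ell^E_{t,k}=O(N^k)$ say that the rescaled Vandermonde blocks have bounded inverses, and its absorption step is a Neumann-series perturbation of the leading-order matrix. Your version is shorter and reuses the earlier structure directly. The paper's version is constructive, avoids subsequences over parity patterns, and has constants depending only on $C$, $d$, $K_0$.

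Two minor points. First, the ``main obstacle'' you flag is already settled by your own argument: the $\kappa$-entries are not rescaled, and $y_s=N/\lambda_{n_s}\to2/(\pi c_s)$ by \eqref{eq:lam-asymp}. Second, the sum/difference transformation has determinant $(-2)^q$, so $\det\widehat\Lambda_\infty=\pm2^qK_0^{2q}\det V_E^\infty\det V_O^\infty$ rather than $\pm(2K_0)^{2q}\det V_E^\infty\det V_O^\infty$. This does not affect non-vanishing.
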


\begin{proof}
Define
\begin{equation}\label{eq:RE-RO-def}
R_E
:=
\Phi^+-\Phi^-+(g_q^+-g_q^-)\Pi_E,
\qquad
R_O
:=
-\Phi^+-\Phi^--(g_q^++g_q^-)\Pi_O.
\end{equation}
where $\Pi_E$ and $\Pi_O$ are given by \eqref{eq:Pi-EO}. Since
\[
\Phi_q^+=-g_q^+,
\qquad
\Phi_q^-=-g_q^-,
\]
and $\Pi_E,\Pi_O$ are monic polynomials of degree $q$, we have
\[
\deg R_E,\deg R_O\leq q-1.
\]

Using \eqref{eq:UV-explicit}, we obtain
\begin{equation}\label{eq:R-comb}
\begin{aligned}
\Phi^++g_q^+U^++g_q^-V^+
&=
\frac{1}{2}(R_E-R_O),
\\
\Phi^-+g_q^+U^-+g_q^-V^-
&=
-\frac{1}{2}(R_E+R_O).
\end{aligned}
\end{equation}
Thus it suffices to estimate the coefficients of $R_E$ and $R_O$.

By \eqref{eq:kappa-leading} and \eqref{eq:ns-range}, we may write
\[
\kappa_{n_s}^{(+)}
=
(-1)^{n_s}K_0+\delta_s^+,
\qquad
-\kappa_{n_s}^{(-)}
=
-K_0+\delta_s^-,
\]
where
\[
\delta_s^\pm=O(N^{-1})
\]
uniformly in $s$. Substituting these expressions into
\eqref{eq:Phi-eq}, and using
$\Pi_E(x_{n_s})=0$ for $s\in E$ and
$\Pi_O(x_{n_s})=0$ for $s\in O$, gives
\begin{equation}\label{eq:R-values}
\begin{aligned}
K_0R_E(x_{n_s})
&=
\varepsilon_s
-\delta_s^+\Phi^+(x_{n_s})
-\delta_s^-\Phi^-(x_{n_s}),
&& s\in E,
\\
K_0R_O(x_{n_s})
&=
\varepsilon_s
-\delta_s^+\Phi^+(x_{n_s})
-\delta_s^-\Phi^-(x_{n_s}),
&& s\in O.
\end{aligned}
\end{equation}

Set
\begin{equation}\label{eq:m-def}
m:=
\max\left\{
\max_{s\in E}|R_E(x_{n_s})|,
\max_{s\in O}|R_O(x_{n_s})|
\right\}.
\end{equation}

By \eqref{eq:lam-asymp},
\[
Nx_{n_s}
=
\frac{N}{\lambda_{n_s}}
\longrightarrow
\frac{2}{\pi c_s}.
\]
Hence there exist constants $C,d>0$ such that, for all sufficiently
large $N$,
\begin{equation}\label{eq:node-bounds}
|x_{n_s}|\leq\frac{C}{N},
\qquad
|x_{n_s}-x_{n_t}|\geq\frac{d}{N}
\end{equation}
whenever $s\neq t$ belong to the same parity class.

Let $\ell_t^E$, $t\in E$, be the Lagrange basis polynomials associated
with the $E$-nodes. Writing
\[
\ell_t^E(x)
=
\sum_{k=0}^{q-1}\ell_{t,k}^E x^k,
\]
the node estimates \eqref{eq:node-bounds} imply
\begin{equation}\label{eq:lagrange-bounds}
|\ell_t^E(y)|\leq C_1,
\qquad
|y|\leq\frac{C}{N},
\qquad
\ell_{t,k}^E=O(N^k),
\quad
k=0,\dots,q-1.
\end{equation}
The same estimates hold for the Lagrange basis polynomials
$\ell_t^O$, $t\in O$.

Since $\deg R_E,\deg R_O\leq q-1$, Lagrange interpolation gives
\[
R_E(x)
=
\sum_{t\in E}R_E(x_{n_t})\ell_t^E(x),
\qquad
R_O(x)
=
\sum_{t\in O}R_O(x_{n_t})\ell_t^O(x).
\]
By \eqref{eq:m-def} and \eqref{eq:lagrange-bounds},
\[
|R_E(y)|+|R_O(y)|
\leq
C_2m,
\qquad
|y|\leq\frac{C}{N}.
\]
If
\[
R_E(x)=\sum_{k=0}^{q-1}R_{E,k}x^k,
\qquad
R_O(x)=\sum_{k=0}^{q-1}R_{O,k}x^k,
\]
then \eqref{eq:lagrange-bounds} also gives
\begin{equation}\label{eq:R-coeff-bound}
R_{E,k},R_{O,k}
=
O(mN^k),
\qquad
k=0,\dots,q-1.
\end{equation}

For every selected node $y=x_{n_s}$, the definitions of $R_E$ and
$R_O$ give
\[
\begin{aligned}
\Phi^+(y)
&=
\frac{1}{2}\Bigl[
R_E(y)-R_O(y)
-(g_q^+-g_q^-)\Pi_E(y)
-(g_q^++g_q^-)\Pi_O(y)
\Bigr],
\\
\Phi^-(y)
&=
-\frac{1}{2}\Bigl[
R_E(y)+R_O(y)
-(g_q^+-g_q^-)\Pi_E(y)
+(g_q^++g_q^-)\Pi_O(y)
\Bigr].
\end{aligned}
\]
By \eqref{eq:node-bounds} and \eqref{eq:Pi-EO},
\[
|\Pi_E(y)|,\ |\Pi_O(y)|=O(N^{-q}),
\]
and therefore
\begin{equation}\label{eq:Phi-node-bound}
|\Phi^\pm(y)|
\leq
C_3\bigl(N^{-q}+m\bigr),
\qquad
y=x_{n_s}.
\end{equation}

Combining \eqref{eq:R-values} with \eqref{eq:Phi-node-bound} and using
$\delta_s^\pm=O(N^{-1})$, we obtain
\begin{equation}\label{eq:m-ineq}
K_0m
\leq
\varepsilon
+
C_4N^{-1}\bigl(N^{-q}+m\bigr).
\end{equation}
Hence, for all sufficiently large $N$,
\begin{equation}\label{eq:m-bound}
m
\leq
C_5\bigl(\varepsilon+N^{-q-1}\bigr).
\end{equation}
Combining \eqref{eq:m-bound} with \eqref{eq:R-coeff-bound}, we obtain
\begin{equation}\label{eq:R-coeff-final}
R_{E,k},R_{O,k}
=
O\!\left(N^k\varepsilon+N^{k-q-1}\right),
\qquad
k=0,\dots,q-1.
\end{equation}
Combining \eqref{eq:R-coeff-final} with \eqref{eq:R-comb} yields
\eqref{eq:stab-bound}.

It remains to prove that $\Lambda_N$ is invertible for all sufficiently
large $N$. 
Suppose that the homogeneous version of \eqref{eq:Delta-eq} has a
solution
\[
\Delta^\pm\in\mathcal P_{q-1}.
\]
Then \eqref{eq:Phi-eq} holds with
\[
\varepsilon_s=0,
\qquad
g_q^\pm=0.
\]
Hence, by
\eqref{eq:RE-RO-def},
\[
R_E=\Delta^+-\Delta^-,
\qquad
R_O=-\Delta^+-\Delta^-.
\]
Since $\varepsilon=0$ and $g_q^\pm=0$, the estimate
\eqref{eq:Phi-node-bound} reduces to
\[
|\Phi^\pm(y)|\leq C_3m.
\]
Substituting this into \eqref{eq:R-values} gives
\[
m\leq \frac{C_4}{K_0N}\,m.
\]
For sufficiently large $N$, this forces $m=0$. Thus $R_E$ and $R_O$
vanish at the $q$ nodes of their respective parity classes. Since
\[
\deg R_E,\deg R_O\leq q-1,
\]
we have
\[
R_E\equiv R_O\equiv0.
\]
Hence
\[
\Delta^+=\Delta^-=0.\]
Thus the homogeneous system has only the trivial solution, and
$\Lambda_N$ is invertible for all sufficiently large $N$.
\end{proof}

Combining Lemma~\ref{lem:stab} with \eqref{eq:eps-small}, we obtain
the decomposition
\begin{equation}\label{eq:Phi-decomp-2}
\Phi^\pm(x)
=
-g_q^+U^\pm(x)-g_q^-V^\pm(x)+\rho^\pm(x),
\end{equation}
where
\[
\rho^\pm(x)
=
\sum_{k=0}^{q-1}\rho_k^\pm x^k,
\qquad
\rho_k^\pm=o(N^{k-q}),
\qquad
k=0,\dots,q-1.
\]
In particular, the coefficient of $x^q$ in $\rho^\pm$ is exactly
zero.

\begin{theorem}\label{thm:gjumps}
Suppose
\[
f\in C^q([-1,1];\mathbb R^2),
\qquad
p,r\in C^q[-1,1],
\]
with $f^{(q)}$ absolutely continuous, where $q\geq1$. Assume that the
indices $n_s=n_s(N)$ satisfy \eqref{eq:ns-range} with
\[
\lim_{N\to\infty}\frac{n_s}{N}=c_s\neq0,
\qquad
s=1,\dots,2q,
\]
that the strict-balance condition \eqref{eq:parity-balance} holds, and
that the limits $\{c_s:s\in E\}$ are pairwise distinct, as are
$\{c_s:s\in O\}$. Then
\begin{equation}\label{eq:tilde-g-asymp}
\widetilde g_k^\pm
=
g_k^\pm
-g_q^+U_k^\pm
-g_q^-V_k^\pm
+o(N^{k-q}),
\qquad
k=0,\dots,q-1,
\end{equation}
where $U_k^\pm$ and $V_k^\pm$ denote the coefficients of $x^k$ in
the polynomials $U^\pm$ and $V^\pm$ given by
\eqref{eq:UV-explicit}. In particular,
\[
\widetilde g_k^\pm-g_k^\pm
=
O(N^{k-q}),
\qquad
k=0,\dots,q-1.
\]
\end{theorem}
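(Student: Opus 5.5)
The proof is essentially a bookkeeping consequence of Lemma~\ref{lem:stab} combined with \eqref{eq:eps-small}. First, since the strict-balance hypothesis and the distinctness of the limits $c_s$ within each parity class are exactly the hypotheses of Lemma~\ref{lem:stab}, that lemma gives that $\Lambda_N$ is invertible for all sufficiently large $N$. Hence the KEG-D system \eqref{eq:keg-d-system} has a unique solution $\widetilde g_k^\pm$, $k=0,\dots,q-1$, and the error polynomials $\Delta^\pm$ are well defined. Subtracting \eqref{eq:keg-d-system} from the exact decomposition \eqref{eq:cn-decomp}--\eqref{eq:PnFn} gives \eqref{eq:Delta-eq}. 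Passing to $\Phi^\pm=\Delta^\pm-g_q^\pm x^q$ as in \eqref{eq:Phi-def} yields \eqref{eq:Phi-eq} with the right-hand sides $\varepsilon_s$ of \eqref{eq:eps-small}. Because $\deg\Delta^\pm\le q-1$, the leading coefficients are $\Phi_q^\pm=-g_q^\pm$, so all hypotheses on $\Phi^\pm$ in Lemma~\ref{lem:stab} are met.

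Next I apply \eqref{eq:stab-bound}. For $k\le q-1$ the coefficient of $x^k$ in $\Phi^\pm$ is $\Phi_k^\pm=\widetilde g_k^\pm-g_k^\pm$, because the term $g_q^\pm x^q$ does not contribute below degree $q$. Thus \eqref{eq:stab-bound} reads
\[
\widetilde g_k^\pm-g_k^\pm+g_q^+U_k^\pm+g_q^-V_k^\pm=O\bigl(N^k\varepsilon+N^{k-q-1}\bigr).
\]
It remains to check that the right-hand side is $o(N^{k-q})$. By \eqref{eq:eps-small}, $\varepsilon=\max_s|\varepsilon_s|=o(N^{-q})$, so $N^k\varepsilon=o(N^{k-q})$. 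Trivially $N^{k-q-1}=o(N^{k-q})$. This gives \eqref{eq:tilde-g-asymp}.

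The one point deserving care is the uniformity in \eqref{eq:eps-small}. Here $\varepsilon_s=\lambda_{n_s}\cdot o(|n_s|^{-q-1})$, with $|n_s|\ge\alpha_0N$ by \eqref{eq:ns-range} and $\lambda_{n_s}=O(N)$ by \eqref{eq:lam-asymp}. Since there are only finitely many ($2q$) indices and each $|n_s|\to\infty$, the $o(\cdot)$ from \eqref{eq:Fn-asymp} is uniform, and we get $\varepsilon_s=o(N^{-q})$.

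For the cruder bound $\widetilde g_k^\pm-g_k^\pm=O(N^{k-q})$, I would show $U_k^\pm,V_k^\pm=O(N^{k-q})$. By Lemma~\ref{lem:UV-balanced}, these are half sums or differences of the coefficients of $\Pi_E$ and $\Pi_O$. The coefficient of $x^k$ in a monic product $\prod_{s}(x-x_{n_s})$ over $q$ nodes is $\pm e_{q-k}$ of the nodes. By \eqref{eq:node-bounds}, each node satisfies $|x_{n_s}|\le C/N$, so this coefficient is $O(N^{-(q-k)})$. Combined with the $o(N^{k-q})$ remainder, this gives the claim. I expect no real obstacle: the only substantive work, the stability estimate and the invertibility of $\Lambda_N$, is already contained in Lemma~\ref{lem:stab}.
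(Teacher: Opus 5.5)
Your proposal is correct and follows the paper's proof essentially step for step. You invoke Lemma~\ref{lem:stab} for the invertibility of $\Lambda_N$ and for the coefficient estimate, use $\varepsilon=o(N^{-q})$ from \eqref{eq:eps-small} to reduce the remainder to $o(N^{k-q})$, identify $\Phi_k^\pm=\widetilde g_k^\pm-g_k^\pm$, and bound $U_k^\pm,V_k^\pm=O(N^{k-q})$ through the node estimates; your remarks on the uniformity in \eqref{eq:eps-small} and on the elementary-symmetric-polynomial form of the coefficients of $\Pi_E,\Pi_O$ only spell out steps the paper leaves implicit.
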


\begin{proof}
By Lemma~\ref{lem:stab}, the matrix $\Lambda_N$ is invertible for all
sufficiently large $N$, so the KEG-D system has a unique solution.
Moreover, by \eqref{eq:eps-small},
\[
\varepsilon=\max_s|\varepsilon_s|=o(N^{-q}).
\]
Hence \eqref{eq:Phi-decomp-2} gives
\[
\Phi^\pm(x)
=
-g_q^+U^\pm(x)-g_q^-V^\pm(x)+\rho^\pm(x),
\]
where
\[
\rho^\pm(x)
=
\sum_{k=0}^{q-1}\rho_k^\pm x^k,
\qquad
\rho_k^\pm=o(N^{k-q}).
\]
By \eqref{eq:Phi-def}, for $k=0,\dots,q-1$,
\[
\Phi_k^\pm
=
\widetilde g_k^\pm-g_k^\pm.
\]
Comparing the coefficients of $x^k$ therefore gives
\eqref{eq:tilde-g-asymp}.

Finally, by \eqref{eq:UV-explicit} and the node estimates
\eqref{eq:node-bounds},
\[
U_k^\pm,V_k^\pm=O(N^{k-q}),
\qquad
k=0,\dots,q-1.
\]
The final assertion follows.
\end{proof}
\begin{remark}\label{rem:mult}
The distinctness assumption on the limits within each parity class
excludes configurations in which several of the ratios $n_s/N$
converge to the same limit within $E$ or within $O$. This is analogous
to the trigonometric case considered in \cite{BBP}, where a
multiplicity $\alpha$ among the limiting values requires
\[
f\in C^{q+\alpha-1}
\]
with $f^{(q+\alpha-1)}$ absolutely continuous.

One expects that \eqref{eq:tilde-g-asymp} and the corresponding
$L_2$-error asymptotics remain valid for clustered configurations when
the maximal multiplicity $\alpha$ is counted separately within each
parity class, provided
\[
f\in C^{q+\alpha-1}([-1,1];\mathbb R^2),
\]
with $f^{(q+\alpha-1)}$ absolutely continuous, and $p,r$ are
sufficiently smooth for the asymptotic expansions in
\eqref{eq:kappapm-asymp} to be continued to the required order in
$1/n$.

We do not prove this extension here. As in
\cite[Theorem~3.3]{BBP}, further integrations by parts extending
\eqref{eq:Fn-asymp} would introduce the additional reduced boundary
values
\[
g_{q+1}^\pm,\dots,g_{q+\alpha-1}^\pm,
\]
which would enter the interpolation equations through higher powers of
$x_{n_s}$.
\end{remark}

%%%%%%%%%%%%%%%%%%%%%%%%%%%%%%%%%%%%%%%%%%%%%%%%%%%%%%%%%%%%%%%%%%%

\section{Asymptotic $L_2$ error estimates}\label{sec:l2}
As in \eqref{eq:Sqn-exact}, define
\begin{equation}\label{eq:Sqn-tilde}
\widetilde S_{q,N}(f)
:=
\widetilde P(x)
+
\sum_{|n|\leq N}\widetilde F_n v_n(x),
\end{equation}
where
\[
\widetilde P(x)
:=
\sum_{k=0}^{q-1}
\bigl[
G_k(x,1,0)\widetilde f_k(1)
-
G_k(x,-1,0)\widetilde f_k(-1)
\bigr].
\]
For the reduced boundary values $\widetilde g_k^\pm$ obtained from
\eqref{eq:keg-d-system}, we choose
\[
\widetilde f_k(\pm1)
:=
\widetilde g_k^\pm
\binom{\cos\gamma_\pm}{-\sin\gamma_\pm},
\qquad
\gamma_+=\beta,
\qquad
\gamma_-=\alpha,
\]
so that, consistently with \eqref{eq:gk},
\[
\begin{pmatrix}
\cos\gamma_\pm & -\sin\gamma_\pm
\end{pmatrix}
\widetilde f_k(\pm1)
=
\widetilde g_k^\pm.
\]
We then set
\[
\widetilde P_n
:=
\langle \widetilde P,v_n\rangle_{L_2},
\qquad
\widetilde F_n
:=
c_n-\widetilde P_n.
\]
The \emph{KEG-D error} is defined by
\[
\widetilde R_{q,N}(f)
:=
f-\widetilde S_{q,N}(f).
\]

\subsection{Main theorem}

\begin{theorem}\label{thm:L2}
Under the hypotheses of Theorem~\ref{thm:gjumps} (in particular, the
strict parity balance $q_e=q_o=q$),
\begin{equation}\label{eq:L2}
\lim_{N\to\infty}N^{2q+1}\|\widetilde R_{q,N}(f)\|^2
=\frac{2^{2q}}{\pi^{2q+2}}\bigl[(A+B)^2\,J_E+(A-B)^2\,J_O\bigr],
\end{equation}
where $A,B$ are the Dirac jump constants \eqref{eq:AB} and
\begin{equation}\label{eq:JEO}
J_E:=\int_{-1}^{1}\!\prod_{s\in E}\!\Bigl(x-\frac{1}{c_s}\Bigr)^{\!2}dx,
\qquad
J_O:=\int_{-1}^{1}\!\prod_{s\in O}\!\Bigl(x-\frac{1}{c_s}\Bigr)^{\!2}dx.
\end{equation}
\end{theorem}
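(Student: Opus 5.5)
The plan is to compute the generalized Fourier coefficients of $\widetilde R_{q,N}(f)$ exactly, extract their leading asymptotics from Lemma~\ref{lem:stab} and Theorem~\ref{thm:gjumps}, and then evaluate $\|\widetilde R_{q,N}(f)\|^2$ by Parseval as a Riemann sum split by parity, exactly as indicated for \eqref{eq:exact-L2}.

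\textbf{Step 1: exact coefficients.} For $|n|\le N$ the coefficient of $\widetilde R_{q,N}(f)$ is $c_n-\widetilde P_n-\widetilde F_n=0$. For $|n|>N$ it is $c_n-\widetilde P_n=F_n+(P_n-\widetilde P_n)$. By the spectral representation of $G_k$ used for $P$, $\widetilde P_n$ has the form \eqref{eq:PnFn} with $g_k^\pm$ replaced by $\widetilde g_k^\pm$. Hence
\[
P_n-\widetilde P_n=-x_n\bigl[\kappa_n^{(+)}\Delta^+(x_n)-\kappa_n^{(-)}\Delta^-(x_n)\bigr].
\]
Substituting $\Delta^\pm=\Phi^\pm+g_q^\pm x^q$ from \eqref{eq:Phi-def} and using \eqref{eq:Fn-asymp}, the $g_q$ terms cancel against the leading part of $F_n$. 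This leaves
\[
\widetilde R_n:=\langle \widetilde R_{q,N}(f),v_n\rangle
=-x_n\bigl[\kappa_n^{(+)}\Phi^+(x_n)-\kappa_n^{(-)}\Phi^-(x_n)\bigr]+o(|n|^{-q-1}),\qquad |n|>N.
\]

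\textbf{Step 2: leading term.} Insert the decomposition \eqref{eq:Phi-decomp-2} together with the kernel description \eqref{eq:UV-explicit}, and replace $\kappa_n^{(\pm)}$ by their leading terms from \eqref{eq:kappa-leading}. The bracket then reduces to $K_0[(-1)^n\Phi^+-\Phi^-](x_n)$.
\begin{itemize}
\item For even $n$ this equals $K_0W_E(x_n)=-K_0(g_q^+-g_q^-)\Pi_E(x_n)=-K_0(A+B)\Pi_E(x_n)$.
\item For odd $n$ it equals $K_0W_O(x_n)=K_0(g_q^++g_q^-)\Pi_O(x_n)=K_0(A-B)\Pi_O(x_n)$.
\end{itemize}
The error terms are controlled as follows.
\begin{itemize}
\item The $\rho^\pm$ contribution: $\rho_k^\pm=o(N^{k-q})$ and $|x_n|\le C/N$ give $|\rho^\pm(x_n)|=o(N^{-q})$, uniformly in $|n|>N$.
\item The $O(1/n)$ corrections to $\kappa_n^{(\pm)}$ multiply $\Phi^\pm(x_n)=O(N^{-q})$ and are therefore $O(N^{-q-1})$.
\end{itemize}
Thus, uniformly for $|n|>N$,
\[
\widetilde R_n=K_0x_n\bigl[\pm(A\pm B)\Pi_{E/O}(x_n)\bigr]+o(N^{-q}|n|^{-1})+o(|n|^{-q-1}).
\]
From \eqref{eq:lam-asymp} we have $x_n=\frac{2}{\pi n}(1+O(1/n))$ and $x_{n_s}=\frac{2}{\pi c_sN}(1+o(1))$. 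Writing $n=Nt$ with $|t|>1$, this gives
\[
\Pi_E(x_n)=\Bigl(\frac{2}{\pi N}\Bigr)^{q}\prod_{s\in E}\Bigl(\frac1t-\frac1{c_s}\Bigr)+o(N^{-q}),
\]
and likewise for $\Pi_O$.

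\textbf{Step 3: Riemann sum.} By Parseval, $N^{2q+1}\|\widetilde R_{q,N}(f)\|^2=N^{2q+1}\sum_{|n|>N}\widetilde R_n^2$. Consider the even indices. Their spacing in $t=n/N$ is $2/N$, so with $K_0^2=1/2$ the sum becomes
\[
\tfrac12\Bigl(\tfrac{2}{\pi}\Bigr)^{2q+2}(A+B)^2\cdot\tfrac12\int_{|t|>1}\frac{1}{t^2}\prod_{s\in E}\Bigl(\frac1t-\frac1{c_s}\Bigr)^2dt .
\]
The substitution $x=1/t$ turns $dt/t^2$ into $dx$ on $(-1,1)$, so the integral equals $J_E$. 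The prefactor is $\tfrac14\cdot 2^{2q+2}/\pi^{2q+2}=2^{2q}/\pi^{2q+2}$. The odd indices give the same prefactor with $(A-B)^2J_O$, which yields \eqref{eq:L2}.

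\textbf{Main obstacle.} The delicate step is justifying the passage from the sum to the integral over the infinite range $|n|>N$ with errors that are only uniform $o(\cdot)$. I would split the range into $N<|n|\le MN$ and $|n|>MN$.
\begin{itemize}
\item On $N<|n|\le MN$, the convergence is a genuine Riemann-sum limit with uniform $o(1)$ relative errors.
\item On $|n|>MN$, I would use the crude bound $|\widetilde R_n|\le C N^{-q}|n|^{-1}+o(|n|^{-q-1})$, whose squared tail sum is at most $C N^{-2q-1}/M$ plus $o(N^{-2q-1})$.
\end{itemize}
Letting first $N\to\infty$ and then $M\to\infty$ closes the argument. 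Cross terms between the main part and the $o$-errors are handled by Cauchy--Schwarz, since the main part is $O(N^{-q-1/2})$ in $\ell^2$.
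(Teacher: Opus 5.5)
Your proposal is correct and follows the paper's proof essentially step for step: the same identity $\widetilde F_n=-x_n\mathcal Q_n+o(|n|^{-q-1})$, the same parity-wise evaluation of $\mathcal Q_n$ from the kernel decomposition ($\Pi_E$ for even $n$, $\Pi_O$ for odd $n$, with the $\rho^\pm$ and $O(1/n)$ corrections giving $o(N^{-q})$), and the same parity-split Riemann sum with the substitution $x=1/t$ and the factor $1/2$ for the density of each parity class. Your truncation of the range at $|n|=MN$, with a crude tail bound and the limit $M\to\infty$, makes explicit the passage to the improper integral that the paper compresses into ``the Riemann-sum argument.''
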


\begin{proof} 
Since $\widetilde F_n$ is the generalized Fourier coefficient of
$f-\widetilde P$, we have
\[
\widetilde R_{q,N}(f)
=
\sum_{|n|>N}\widetilde F_n v_n.
\]
Hence, by Parseval,
\[
\|\widetilde R_{q,N}(f)\|^2
=
\sum_{|n|>N}|\widetilde F_n|^2.
\]
Since
\[
\widetilde F_n
=
F_n+(P_n-\widetilde P_n),
\]
we first compute the second term. By the definitions of $P_n$ and
$\widetilde P_n$,
\begin{align*}
P_n-\widetilde P_n
&=
-x_n\bigl[
\kappa_n^{(+)}\Delta^+(x_n)
-\kappa_n^{(-)}\Delta^-(x_n)
\bigr]
\\
&=
-x_n\bigl[
\kappa_n^{(+)}\Phi^+(x_n)
-\kappa_n^{(-)}\Phi^-(x_n)
\bigr]
\\
&\quad
-x_n^{q+1}
\bigl[
\kappa_n^{(+)}g_q^+
-\kappa_n^{(-)}g_q^-
\bigr],
\end{align*}
where we used \eqref{eq:Phi-def}. Define
\[
\mathcal Q_n
:=
\kappa_n^{(+)}\Phi^+(x_n)
-\kappa_n^{(-)}\Phi^-(x_n).
\]
Then \eqref{eq:Fn-asymp} gives
\[
P_n-\widetilde P_n
=
-x_n\mathcal Q_n-F_n+r_{n},
\qquad
|n|>N,
\]
where
\[
r_{n}=o(|n|^{-q-1})
\]
uniformly for $|n|>N$. Therefore
\begin{equation}\label{eq:tildeFn-id}
\widetilde F_n
=
-x_n\mathcal Q_n+r_{n}.
\end{equation}

By \eqref{eq:Phi-decomp-2},
\begin{align*}
\mathcal Q_n
&=
-g_q^+
\bigl[
\kappa_n^{(+)}U^+(x_n)
-\kappa_n^{(-)}U^-(x_n)
\bigr]
\\
&\quad
-g_q^-
\bigl[
\kappa_n^{(+)}V^+(x_n)
-\kappa_n^{(-)}V^-(x_n)
\bigr]
\\
&\quad
+\kappa_n^{(+)}\rho^+(x_n)
-\kappa_n^{(-)}\rho^-(x_n).
\end{align*}
Since
\[
\rho^\pm(x)
=
\sum_{k=0}^{q-1}\rho_k^\pm x^k,
\qquad
\rho_k^\pm=o(N^{k-q}),
\]
and $|x_n|=O(N^{-1})$ uniformly for $|n|>N$, it follows that
\[
\rho^\pm(x_n)=o(N^{-q}),
\]
uniformly for $|n|>N$ as $N\to\infty$.

Using \eqref{eq:kappa-leading} and \eqref{eq:UV-explicit}, we now
distinguish the parity of the tail index $n$. For $|n|>N$, we have
\[
U^\pm(x_n),V^\pm(x_n)=O(N^{-q}).
\]
Hence, if $n$ is even,
\[
\kappa_n^{(+)}U^+(x_n)-\kappa_n^{(-)}U^-(x_n)
=
K_0\Pi_E(x_n)
+
o\!\left(N^{-q}\right),
\]
and
\[
\kappa_n^{(+)}V^+(x_n)-\kappa_n^{(-)}V^-(x_n)
=
-K_0\Pi_E(x_n)
+
o\!\left(N^{-q}\right).
\]
If $n$ is odd,
\[
\kappa_n^{(+)}U^+(x_n)-\kappa_n^{(-)}U^-(x_n)
=
-K_0\Pi_O(x_n)
+
o\!\left(N^{-q}\right),
\]
and
\[
\kappa_n^{(+)}V^+(x_n)-\kappa_n^{(-)}V^-(x_n)
=
-K_0\Pi_O(x_n)
+
o\!\left(N^{-q}\right).
\]
Hence
\begin{equation}\label{eq:Qn-cases}
\mathcal Q_n
=
\begin{cases}
-K_0(g_q^+-g_q^-)\Pi_E(x_n)+o\!\left(N^{-q}\right),
& n\text{ even},
\\[2mm]
\phantom{-}K_0(g_q^++g_q^-)\Pi_O(x_n)+o\!\left(N^{-q}\right),
& n\text{ odd}.
\end{cases}
\end{equation}

Combining \eqref{eq:tildeFn-id} and \eqref{eq:Qn-cases}, and using
\eqref{eq:Pi-EO}, we obtain
\begin{equation}\label{eq:tildeFn-prod}
\widetilde F_n
=
\begin{cases}
\displaystyle
K_0(g_q^+-g_q^-)\,
x_n^{q+1}
\prod_{s\in E}
\left(1-\frac{n}{n_s}\right)
+x_n\sigma_{n,N},
& n\text{ even},
\\[4mm]
\displaystyle
-K_0(g_q^++g_q^-)\,
x_n^{q+1}
\prod_{s\in O}
\left(1-\frac{n}{n_s}\right)
+x_n\sigma_{n,N},
& n\text{ odd},
\end{cases}
\end{equation}
where
\[
\sup_{|n|>N}|\sigma_{n,N}|=o(N^{-q}).
\]
Here we used \eqref{eq:Pi-EO} and \eqref{eq:lam-asymp}. Indeed,
\[
nx_n=\frac{2}{\pi}+O\!\left(\frac{1}{n}\right),
\]
and hence, uniformly for $|n|>N$,
\[
(x_n-x_{n_s})
-
x_n\left(1-\frac{n}{n_s}\right)
=
\frac{nx_n-n_sx_{n_s}}{n_s}
=
O(N^{-2}).
\]
It follows that
\[
x_n\Pi_E(x_n)
=
x_n^{q+1}
\prod_{s\in E}
\left(1-\frac{n}{n_s}\right)
+
x_nO(N^{-q-1}),
\]
and analogously for $\Pi_O$.

The remainders in \eqref{eq:tildeFn-prod} are of the form
$x_n\sigma_{n,N}$, where
\[
\sup_{|n|>N}|\sigma_{n,N}|=o(N^{-q}).
\]
Since
\[
\sum_{|n|>N}|x_n|^2=O(N^{-1}),
\]
we have
\[
\sum_{|n|>N}|x_n\sigma_{n,N}|^2
=
o(N^{-2q-1}).
\]
The squared sum of the leading terms in \eqref{eq:tildeFn-prod} is
$O(N^{-2q-1})$, so the corresponding cross terms are also
$o(N^{-2q-1})$ by the Cauchy--Schwarz inequality. Thus only the
leading terms in \eqref{eq:tildeFn-prod} contribute to the limit.

For the even indices, the Riemann-sum argument gives
\[
\begin{aligned}
N^{2q+1}
\sum_{\substack{|n|>N\\ n\ {\rm even}}}
|\widetilde F_n|^2
\longrightarrow
K_0^2(g_q^+-g_q^-)^2
\frac{(2/\pi)^{2q+2}}{2}
\int_{|u|>1}
u^{-2q-2}
\prod_{s\in E}
\left(1-\frac{u}{c_s}\right)^2du.
\end{aligned}
\]
The factor $1/2$ accounts for the density of the even integers.
With the change of variables $u=1/x$,
\[
\int_{|u|>1}
u^{-2q-2}
\prod_{s\in E}
\left(1-\frac{u}{c_s}\right)^2du
=
\int_{-1}^{1}
\prod_{s\in E}
\left(x-\frac{1}{c_s}\right)^2dx
=
J_E.
\]
Similarly,
\[
N^{2q+1}
\sum_{\substack{|n|>N\\ n\ {\rm odd}}}
|\widetilde F_n|^2
\longrightarrow
K_0^2(g_q^++g_q^-)^2
\frac{(2/\pi)^{2q+2}}{2}
J_O.
\]

Using $K_0^2=1/2$, we conclude that
\[
N^{2q+1}\|\widetilde R_{q,N}(f)\|^2
\longrightarrow
\frac{2^{2q}}{\pi^{2q+2}}
\left[
(g_q^+-g_q^-)^2J_E
+
(g_q^++g_q^-)^2J_O
\right].
\]
Finally, by \eqref{eq:AB},
\[
g_q^+-g_q^-=A+B,
\qquad
g_q^++g_q^-=A-B,
\]
so this is precisely \eqref{eq:L2}.
\end{proof}

\begin{remark}\label{rem:reduction}
For comparison, applying the same Riemann-sum argument to the
exact-jumps coefficients $F_n$ gives \eqref{eq:exact-L2}:
\[
\lim_{N\to\infty}N^{2q+1}\|R_{q,N}(f)\|^2
=
\frac{2^{2q+2}}{\pi^{2q+2}(2q+1)}(A^2+B^2).
\]
Thus, so long as the parity balance holds, computing the boundary
values from the generalized Fourier coefficients does not degrade the
rate of convergence, but changes the asymptotic constant.
\end{remark}

\section*{Acknowledgements}
R.~Barkhudaryan and G.~Gevorkyan were supported by the Higher Education and Science Committee of MESCS RA (Research project No.~25RG-1A195).

The authors used generative AI tools solely as an assistive aid during the preparation of this manuscript to generate some ideas and to improve the clarity, readability, language, and presentation of the text. The authors take full responsibility for the accuracy, originality, integrity, and final content of the manuscript, and all AI-assisted output was carefully reviewed and validated by the authors before inclusion.

\bibliographystyle{abbrv}
\bibliography{main}
\end{document}